\documentclass[a4paper, 11pt, twoside]{article}
\usepackage{amsthm}
\usepackage{amsmath}
\usepackage{fancyhdr}
\usepackage{amssymb}
\usepackage{mathtools}
\usepackage{indentfirst}
\usepackage{color}
\usepackage{bm}
\usepackage{extarrows}
\usepackage[pagewise]{lineno}
\usepackage{babel}
\usepackage{hyperref}
\usepackage[utf8]{inputenc}
\usepackage{amsfonts}
\usepackage{lmodern}
\usepackage{tikz}
\usepackage{enumerate}
\usetikzlibrary{matrix}
\usepackage{mathrsfs}
\usepackage{etoolbox}
\usepackage{keyval}
\usepackage{csquotes}
\usepackage{mathtools}
\usepackage{dsfont}
\usepackage{tcolorbox}
\usepackage{bbm}
\usepackage{url}

\bibliographystyle{abbrv}

\theoremstyle{plain}
\newtheorem{proposition}{Proposition}
\newtheorem{lemma}[proposition]{Lemma}
\newtheorem{theorem}[proposition]{Theorem}

\theoremstyle{definition}

\theoremstyle{definition}
\newtheorem{conj}{Conjecture}
\newtheorem{rem}[conj]{Remark}
\theoremstyle{plain}

\numberwithin{equation}{section}
\numberwithin{proposition}{section}
\numberwithin{conj}{section}	

\usepackage{amsmath}

\usepackage{geometry}
\geometry{
	paper=a4paper,
	top=3cm,
	inner=2.54cm,
	outer=2.54cm,
	bottom=3cm,
	headheight=5ex,
	headsep=5ex,
}

\pagestyle{fancy}

\begin{document}

\title{Optimal Control for Kuramoto Model: from Many-Particle Liouville Equation to Diffusive Mean-Field Problem}
\author{
\sl{Li Chen}\\
   \small\emph{School of Business Informatics and Mathematics, University of Mannheim,} \\
   \small\emph{Mannheim, 68131, Germany}\\
\sl{Yucheng Wang\footnote{Corresponding author}}\\
 \small\emph{Department of Applied Mathematics, The Hong Kong Polytechnic University,}\\
    \small\emph{Hong Kong, China}\\
\sl{Valeriia Zhidkova}\\
   \small\emph{School of Business Informatics and Mathematics, University of Mannheim,}\\
   \small\emph{Mannheim, 68131, Germany}
    }
\date{}
\footnotetext[1]{MSC: 35Q92, 35Q93, 92B25, 35K55.}
\footnotetext[2]{Keywords: Mean field control, optimal control of PDE, relative entropy method, Kuramoto model.}
\footnotetext[3]{E-mail addresses: chen@math.uni-mannheim.de, yucheng.wang@polyu.edu.hk, valeriia.zhidkova@uni-mannheim.de.}

\maketitle

\begin{abstract}
In this paper, we investigate the mean-field optimal control problem of a swarm of Kuramoto oscillators. Using the notion of wrapped distribution, we explain the connection between the stochastic particle system and the mean-field PDE on the periodic domain. In the limit of an infinite number of oscillators the collective dynamics of the agents' density is described by a diffusive mean-field model in the form of a non-local PDE, where the non-locality arises from the synchronization mechanism. We prove the existence of the optimal control of the mean-field model by using $\Gamma$-convergence strategy of the cost functional corresponding to the Liouville equation on the particle level. In the discussion of propagation of chaos for fixed control functions we complete the relative entropy estimate by using large deviation estimate given by \cite{MR3858403}. 
\end{abstract}

\section{Introduction}

Synchronization refers to a process during which two or more dynamical systems adjust some of their properties to a common behavior due to strong or weak coupling. This phenomenon is observed in biological, chemical, physical, and social systems and it has attracted the interest of scientists for centuries. Some examples in biology and medicine include synchronous flashing of fireflies \cite{fireflies}, synchronization of oscillations of human insulin secretion and glucose infusion \cite{21fffdf5a40e4959ba645110b0d27bf8} and synchronized firing of pacemaker cells in the heart \cite{winfree}. Neuroscience research of cortical processes has much to gain from understanding the fundamental mechanisms governing fluctuating oscillations in large-scale cortical circuits. We refer to \cite{neurobiology} for a detailed overview of neurobiological implications of deeper mathematical understanding of synchronization mechanisms. The robotics community has devised control strategies for large-scale collectives of robots with the goal of mimicking the resilience, efficiency, and adaptive behavior of biological swarms \cite{robotics}. There are many engineering examples, such as synchronization in power grids \cite{powergrids} or in laser arrays \cite{BLACKBEARD201443}. Moreover, the generalization that complex systems tend to be
oscillatory has been recently applied in environment and sustainability research, for example for ecosystems of pests and crops \cite{ecosystems} and evolution of heat waves in urban climate networks \cite{WANG2021100909}. We refer to Chapter VII in \cite{acebron} for a summary of various applications of oscillator models. \\

The most successful attempt at mathematical treatment of these synchronized phenomena was due to Kuramoto \cite{kuramoto}, who analyzed a model of phase oscillators running at arbitrary intrinsic frequencies, and coupled through the sine of their phase differences. The original Kuramoto model with all-to-all coupling and random intrinsic frequencies has the following form:
\begin{align} \label{kuramoto}
\dot{\theta}_i = \omega_i + \frac{K}{N} \sum_{j=1}^N \sin (\theta_j - \theta_i).
\end{align}
Here, $\theta_i : \mathbb{R} \rightarrow S^1 := \mathbb{R} / 2\pi \mathbb{Z}, \ i = 1, \dots, N$ is the phase of the oscillator $i$, whose intrinsic frequency $\omega_i$ is drawn from the probability distribution, $N$ is the number of oscillators, and $K$ is the strength of coupling. The sum on the right-hand side describes the interactions of the oscillators in the network. The Kuramoto model has been extensively analyzed to find out the synchronization mechanism \cite{ha,verwoerd}, effect of network structures \cite{1383983} and the coupling strength which induces the synchronization \cite{CHOI2012735, HA20101692}.  \\

The classical mean-field theory has proven to be a useful tool in understanding collective behavior of multi-agent systems. For many such models the description of the asymptotic behavior of a very large number of agents can become numerically expensive. One of the attempts to tackle this difficulty is the mean-field theory which simplifies the effect of all other individuals on any given individual by a single averaged effect. We refer to \cite{Jabin2017} and \cite{chen2024fluctuationsmeanfieldlimitattractive} and the references therein for a recent summary on the mean-field limit of large ensembles of interacting particles.  \\

Optimal control problems are naturally considered within the mean-field framework because of the incompleteness of self-organization. As an example, under certain circumstances, the formation of a specific pattern is conditional to the initial datum being positioned in a corresponding basin of attraction \cite{Chuang}. Thus it is interesting to question, whether a policy maker, represented by a control function, can drive the system towards pattern formation. One of the first works to introduce the concept of mean-field optimal control was \cite{Fornasier}. We also refer to \cite{chen2024meanfieldcontroldiffusionaggregation} and references therein for a summary of the recent developments in the mean-field optimal control theory. \\

In this paper, we investigate the optimal control problem proposed in \cite{sinigaglia2021optimalcontrolvelocitynonlocal}. There, the authors Carlo Sinigaglia, Francesco Braghin, and Spring Berman consider the following controlled mean-field PDE for the Kuramoto model
\begin{align} \label{mfpde}
\partial_t q - D \partial_{\theta}^2 q + \partial_\theta \left( q \left( u_1 + u_2 w [q] \right) \right) = 0,
\end{align}
where
\begin{itemize}
	\item 
$w[q] = \displaystyle\int_{S^1} \sin(\theta ' - \theta - \alpha) q(t, \theta ') \, d\theta',$
	\item $q(t,\theta) : [0,T] \times S^1 \rightarrow [0,\infty)$ is the oscillator density at time $t$, 
	\item $D>0$ is the diffusion coefficient,
	\item $\alpha$ is a constant phase shift,
	\item $u_1: [0,T] \times S^1 \rightarrow \mathbb{R}$ is the control function of the angular velocity,
	\item $u_2: [0,T] \times S^1 \rightarrow \mathbb{R}$ is the control function that modulates the strength of agent interactions.
\end{itemize}
The term $\omega[q]$ is the non-locality that arises from the synchronization mechanism in the Kuramoto dynamics (\ref{kuramoto}). The authors formulate the optimal control problem subject to the mean-field PDE (\ref{mfpde}). A natural objective is to compute
the minimum control inputs $u_1 , u_2$ that drive the oscillator density to a target density $z(t,\theta)$ at a final time $t = T$. The cost functional given in \cite{sinigaglia2021optimalcontrolvelocitynonlocal} to be minimized is defined as
\begin{align} \label{costfunc}
\begin{split}
\mathcal{J} (q[u_1,&u_2], u_1, u_2) = \frac{\alpha_r}{2} \int_0^T \int_{S^1}  (q(t,\theta) - z(t,\theta))^2 \, d\theta dt \\
&+ \frac{\alpha_t}{2} \int_{S^1} (q(T,\theta) - z(T,\theta))^2 \, d\theta + \frac{1}{2} \int_0^T \int_{S^1} (\beta_1 u_1^2 (t,\theta) + \beta_2 u^2_2(t,\theta)) \, d\theta dt,
\end{split}
\end{align} 
where $\alpha_r, \alpha_t, \beta_1, \beta_2 >0$ are weighting constants and $q = q[u_1,u_2]$ solves (\ref{mfpde}) with given initial data $q_0$. The goal of the optimal control problem is to find functions $\overline{u}_1, \overline{u}_2$ such that
\begin{align} \label{ocp}
\mathcal{J}(q[\overline{u}_1,\overline{u}_2], \overline{u}_1,\overline{u}_2) =  \min_{u_1,u_2} \mathcal{J}(q[u_1,u_2], u_1,u_2).
\end{align}

The authors use the Lagrangian method in order to derive a set of first-order necessary optimality conditions and then proceed to solve the optimal control problem numerically using an iterative gradient descent method. Analyzing the results, the authors come to a conclusion that both the velocity and interaction strength control inputs are able to successfully drive the oscillator density to a target distribution while increasing the self-synchronization speed. Interaction strength control is able to achieve similar performance as velocity control, with slightly slower convergence to synchronization, despite its controllability limitations. \\

To our knowledge, \cite{sinigaglia2021optimalcontrolvelocitynonlocal} constitutes the first attempt at formulating an optimal control problem for mean-field dynamics of identical Kuramoto oscillators. Building on that, we aim to prove solvability of proposed control problem (\ref{ocp}) using the mean-field control framework. More precisely, we use the Liouville equation corresponding to (\ref{mfpde}) to formulate the microscopic control problem and then take the limit $N \rightarrow \infty$ using Gamma-convergence techniques. This approach does not only allow us to obtain the existence of the minimizer of (\ref{ocp}) but also helps us understand the same control problem in different scales. We discuss this approach in more detail in the following.  \\

As mentioned in \cite{sinigaglia2021optimalcontrolvelocitynonlocal}, the mean-field PDE (\ref{mfpde}) is motivated by the following controlled dynamical many-particle model:
\begin{align}\label{kuramotoparticle}
\begin{cases}
d\theta_i = u_1(t,\theta_i ) \, dt + \displaystyle \frac{u_2(t,\theta_i)}{N} \displaystyle \sum_{j=1}^N \sin (\theta_j - \theta_i - \alpha) \, dt+ \sqrt{2D} \, d W_i (t), \ t>0,   \\
\theta_i (0) = \theta_i^0, \ i=1,\dots,N,
\end{cases}
\end{align} 
where $(W_i(t))_{i=1}^N$ denote independent Brownian motions and the initial data $(\theta_i^0)_{i=1}^N$ are assumed to be i.i.d. random variables. \\
In the following, we explain the connection between McKean-Vlasov SDE system and the corresponding non-local PDE on $S^1$ as well as between the particle SDE system and its Liouville equation on $(S^1)^N$ making use of the wrapped distribution
\footnote{For a random variable $x$ on $\mathbb{R}$ with probability density $f$, the corresponding random variable $x_w := x \mod 2\pi$ has wrapped probability density $f_w$ given by
\begin{align*}
f_w(y) = \sum_{k=-\infty}^\infty f(y + 2\pi k). 
\end{align*}
For an arbitrary $2\pi$-periodic function $g:\mathbb{R} \rightarrow\mathbb{R}$ it holds
\begin{align} \label{circint}
\begin{split}
\int_\mathbb{R} g(x) f(x) \, dx = \sum_{k=-\infty}^\infty \int_{2\pi k}^{2\pi (k+1)} g(x) f(x) \, dx  
= \int_{0}^{2\pi} g(x) f_w (x) \, dx .
\end{split}
\end{align}}. We refer to \cite{dirstat} for more information about wrapped distributions. For given control functions $u_1,u_2$ on $S^1$ we use the same notation for their periodic extensions to $\mathbb{
R}$. We start by noticing that the mean-field problem of (\ref{kuramotoparticle}) (as a problem on $\mathbb{R}^N$)
\begin{align*}
\begin{cases}
d\tilde{\theta} = u_1(t,\tilde{\theta}) \, dt + u_2(t,\tilde{\theta}) \left(\displaystyle \int_\mathbb{R} \sin (\hat{\theta} - \tilde{\theta}- \alpha) \tilde{q}(\hat{\theta}) \, d\hat{\theta} \right) \, dt+ \sqrt{2D} \, d W (t), \ t>0,   \\
\tilde{\theta}(0) = \theta_0, \ \tilde{q} = \mathrm{Law} (\tilde{\theta})
\end{cases}
\end{align*}
admits a unique strong solution $\tilde{\theta} : \Omega \times [0,T] \rightarrow \mathbb{R}$. This can be easily proven by a standard fixed point argument, see for example \cite{lacker2018mean}. With (\ref{circint}) it follows that $\overline{\theta} := \tilde{\theta} \mod 2\pi$ solves the McKean-Vlasov SDE on $S^1$
\begin{align} \label{mckeanvlasov}
\begin{cases}
d\overline{\theta} = u_1(t,\overline{\theta}) \, dt + u_2(t,\overline{\theta}) \left(\displaystyle \int_0^{2\pi} \sin (\tilde{\theta} - \overline{\theta} - \alpha) q(\tilde{\theta}) \, d\tilde{\theta} \right) \, dt+ \sqrt{2D} \, d W (t), \ t>0,   \\
\overline{\theta}(0) = \theta_0 \mod 2\pi, \ q = \mathrm{Law} (\overline{\theta}).
\end{cases}
\end{align}
A given test function $\phi : S^1 \rightarrow \mathbb{R}$ is a $2\pi$-periodic function on $\mathbb{R}$. This is then also true for its partial derivatives. This fact allows us to apply the Itô formula to $\phi(\tilde{\theta})$ and, after taking expectation, to rewrite the resulting equation in terms of expectations on $S^1$ using the notion of wrapped distribution.
 We obtain
\begin{align*}
\phi(\tilde{\theta}(t)) - \phi(\tilde{\theta}(0)) = &\int_0^t \phi' (\tilde{\theta}(s)) \left( u_1(s,\tilde{\theta}(s)) +u_2(s,\tilde{\theta}(s)) \left( \int_\mathbb{R} \sin (\hat{\theta} - \tilde{\theta} -\alpha) \tilde{q}(\hat{\theta}) \, d\hat{\theta} \right) \right) \, ds \\
&+ \int_0^t \sqrt{2D}\phi' (\tilde{\theta} (s)) \, dW(s)+ D \int_0^t \phi '' (\tilde{\theta}(s)) \, ds
\end{align*}
and
\begin{align*}
&\int_\mathbb{R} \phi(\tilde{\theta}) \tilde{q} (t,\tilde{\theta}) \, d\tilde{\theta} - \int_\mathbb{R} \phi(\tilde{\theta}) \tilde{q} (0,\tilde{\theta}) \, d\tilde{\theta}\\ 
= &\int_\mathbb{R} \int_0^t \phi' (\tilde{\theta}) \left( u_1(s,\tilde{\theta}) +u_2(s,\tilde{\theta}) \left( \int_\mathbb{R} \sin (\hat{\theta} - \tilde{\theta} -\alpha) \tilde{q}(\hat{\theta}) \, d\hat{\theta} \right) \right) \tilde{q}(s,\tilde{\theta}) \, d\tilde{\theta}\, ds \\
&+ D \int_\mathbb{R} \int_0^t \phi '' (\tilde{\theta}) \tilde{q} (s, \tilde{\theta}) \, d\tilde{\theta} \, ds.
\end{align*}
Now using (\ref{circint}) and the $2\pi$-periodicity of $\phi$ and its derivatives we obtain
\begin{align*}
&\int_0^{2\pi} \phi(\overline{\theta}) q (t,\overline{\theta}) \, d\overline{\theta} - \int_0^{2\pi} \phi(\overline{\theta}) q (0,\overline{\theta}) \, d\overline{\theta} \\
= &\int_0^{2\pi}\int_0^t \phi' (\overline{\theta}) \left( u_1(s,\overline{\theta}) +u_2(s,\overline{\theta}) \left( \int_0^{2\pi} \sin (\hat{\theta} - \overline{\theta} -\alpha) q(\hat{\theta}) \, d\hat{\theta} \right) \right) q(s,\overline{\theta}) \, d\overline{\theta}\, ds \\
&+ D \int_0^{2\pi}
 \int_0^t \phi '' (\overline{\theta})q (s, \overline{\theta}) \, d\overline{\theta} \, ds.
\end{align*}
This shows that the wrapped distribution $q = \mathrm{Law} (\overline{\theta})$ from (\ref{mckeanvlasov}) solves (\ref{mfpde}).

Using this argument, we derive the Liouville equation corresponding to (\ref{kuramotoparticle}). The system (\ref{kuramotoparticle}) has a unique strong solution $(\tilde{\theta}_1, \dots, \tilde{\theta}_N) : \Omega \times [0,T] \rightarrow \mathbb{R}^N$ due to the standard SDE theory. Considering this solution "wrapped" around $(S^1)^N$, i.e. each component $\theta_i  = \tilde{\theta}_i \mod 2\pi$ for $i=1,\dots,N$, we obtain the following equation 
\begin{align} \label{PDEqN}
\partial_t q^N - \sum_{i=1}^N D \partial_{\theta_i}^2 q^N + \sum_{i=1}^N \partial_{\theta_i} \left( q^N \left(u_1 (t,\theta_i) +\frac{u_2 (t,\theta_i)}{N} \sum_{j=1}^N \sin (\theta_j - \theta_i - \alpha) \right) \right) = 0,
\end{align}
where $q^N$ denotes the wrapped joint distribution of $\theta_i, \ i = 1, \dots, N$ with initial data $q^N \vert_{t=0} = q_0^{\otimes N} = q_0(\theta_1) \dots q_0 (\theta_N)$. We denote the first marginal of $q^{N} [u_1,u_2]$ by 
\begin{align*}
q^{N;1} [u_1,u_2] (t,\theta) = \int_{(S^1)^{N-1}} q^{N} [u_1,u_2] (t, \theta, \theta_2, \dots, \theta_N) \, d\theta_2 \dots d\theta_N
\end{align*}
and consider the cost functional on the microscopic level (parallel to the cost functional (\ref{costfunc}) for the mean-field PDE (\ref{mfpde})):
\begin{align} \label{JNfunctional}
\begin{split}
\mathcal{J}_N (q^N[&u_1,u_2], u_1, u_2) = \frac{\alpha_r}{2} \int_0^T \int_{S^1}  (q^{N;1}(t,\theta) - z(t,\theta))^2 \, d\theta dt \\
&+ \frac{\alpha_t}{2} \int_{S^1} (q^{N;1}(T,\theta) - z(T,\theta))^2 \, d\theta  + \frac{1}{2} \int_0^T \int_{S^1} (\beta_1 u_1^2 (t,\theta) + \beta_2 u^2_2(t,\theta)) \, d\theta dt \\
& \quad \quad \quad \quad \quad \quad =:  \mathcal{J}^r_N + \mathcal{J}^t_N + \mathcal{J}_N^u. 
\end{split}
\end{align}

The minimization of the cost functional describes that the joint distribution must achieve the given distribution $z$, where we give particular importance to the time $T$, with minimal cost associated to the control functions $u_1,u_2$. For simplicity, we assume $z \in L^\infty(0,T; L^\infty (S^1))$. Furthermore, we define the space of control functions as
\begin{align} \label{cfspace}
\mathcal{U} = \left\lbrace u \in L^\infty (0,T; W^{1,q} (S^1)) \ | \ ||u||_{L^\infty (0,T; W^{1,q} (S^1))} \leq M \right\rbrace
\end{align} 
for $q>2$ and some uniform constant $M >0$. It follows directly from Sobolev imbedding theorem for compact manifolds (see \cite{MR1636569}) that for every function $u\in \mathcal{U}$ it holds
\begin{align} \label{ubound}
||u||_{L^\infty (0,T; L^\infty (S^1))}< C_{\text{Sobolev}}M =: \tilde{M}.
\end{align}
This choice of the space of control functions is the least restrictive as it only takes into account the necessary conditions to allow the application of our strategy. \\

The main result of this paper is the following theorem.

\begin{theorem} \label{mainresult}
Assume that the initial data is probability density $q_0 \in H^1(S^1)$. Then for any fixed $N$, the cost functional $\mathcal{J}_N (q^N[u_1,u_2], u_1, u_2)$ has a pair of minimizers $u^N_1,u^N_2 \in \mathcal{U}$, i.e.
\begin{align*}
\mathcal{J}_N (q^N[u^N_1,u^N_2], u^N_1, u^N_2) = \min_{u_1,u_2 \in \mathcal{U}}\mathcal{J}_N (q^N[u_1,u_2], u_1, u_2),
\end{align*}
where $q^N[u_1,u_2]$ is the solution of (\ref{PDEqN}) for some control functions $u_1,u_2 \in \mathcal{U}$. Moreover, any weak accumulation point $(\overline{u}_1,\overline{u}_2) \in \mathcal{U}^2$ of $(u^N_1,u^N_2)_{N \in \mathbb{N}}$ is a minimizer of the cost functional $\mathcal{J}$ and it holds
\begin{align*}
\mathcal{J} (q [\overline{u}_1,\overline{u}_2], \overline{u}_1, \overline{u}_2) = \min_{u_1,u_2 \in \mathcal{U}}\mathcal{J} (q [u_1,u_2], u_1, u_2).
\end{align*}
\end{theorem}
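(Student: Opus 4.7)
The plan is to proceed in two stages that together mirror the passage from the particle-level optimization to the mean-field one: first establish existence of a minimizer of $\mathcal{J}_N$ for each fixed $N$ via the direct method of the calculus of variations, and then identify accumulation points of these minimizers as $N\to\infty$ through a $\Gamma$-convergence argument built on propagation of chaos for the controlled Liouville equation (\ref{PDEqN}).

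For the fixed-$N$ part, I would take a minimizing sequence $(u_1^{N,k}, u_2^{N,k})_k \subset \mathcal{U}$. Boundedness in $L^\infty(0,T;W^{1,q}(S^1))$ combined with reflexivity of $W^{1,q}(S^1)$ for $q>2$ yields, via Banach--Alaoglu, a weakly-$*$ convergent subsequence with limit $(u_1^N,u_2^N)$, which is in $\mathcal{U}$ because the norm-ball is weakly-$*$ closed; the compact embedding $W^{1,q}(S^1) \hookrightarrow C(S^1)$ combined with the uniform estimate (\ref{ubound}) provides strong convergence of a further subsequence in $L^p(0,T;C(S^1))$ for any finite $p$. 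Standard parabolic energy estimates for (\ref{PDEqN}) with bounded drifts give uniform bounds for $q^N[u_1^{N,k},u_2^{N,k}]$ in $L^\infty(0,T;L^2((S^1)^N)) \cap L^2(0,T;H^1((S^1)^N))$ with a uniform $L^2(0,T;H^{-1})$ bound on the time derivative, so Aubin--Lions yields strong $L^2$ convergence. This is enough to pass to the limit in the weak formulation of (\ref{PDEqN}) and identify $q^N[u_1^N,u_2^N]$ as the limit. Strong convergence of the first marginal makes $\mathcal{J}_N^r$ and $\mathcal{J}_N^t$ continuous, and the convex tracking term $\mathcal{J}_N^u$ is weakly lower semi-continuous, hence $(u_1^N, u_2^N)$ minimizes $\mathcal{J}_N$.

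For the passage $N \to \infty$, the same weak-$*$ compactness argument furnishes an accumulation point $(\overline u_1, \overline u_2) \in \mathcal{U}$ of $(u_1^N, u_2^N)_N$. The core analytic input is propagation of chaos at fixed controls: for every $(u_1,u_2) \in \mathcal{U}$ the first marginal $q^{N;1}[u_1,u_2]$ converges to $q[u_1,u_2]$ strongly in $L^2([0,T]\times S^1)$, established through the relative entropy method completed by the large-deviation estimate of \cite{MR3858403}. Granted this, the liminf inequality $\mathcal{J}(q[\overline u_1,\overline u_2],\overline u_1,\overline u_2) \le \liminf_N \mathcal{J}_N(q^N[u_1^N,u_2^N], u_1^N, u_2^N)$ is obtained by combining propagation of chaos with the strong/weak continuity arguments used at fixed $N$ (and with weak lower semi-continuity of $\mathcal{J}_N^u$). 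The $\Gamma$-limsup part requires only the constant recovery sequence $u^N \equiv u$: propagation of chaos immediately gives $\mathcal{J}_N(q^N[u_1,u_2], u_1, u_2) \to \mathcal{J}(q[u_1,u_2], u_1, u_2)$. Chaining these with the minimality $\mathcal{J}_N(q^N[u_1^N,u_2^N], u_1^N, u_2^N) \le \mathcal{J}_N(q^N[u_1,u_2], u_1, u_2)$ for arbitrary $(u_1,u_2)\in\mathcal{U}$ yields the optimality of $(\overline u_1, \overline u_2)$.

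The main obstacle I expect is the handling of the non-local term $u_2(t,\theta_i)\,N^{-1}\sum_j \sin(\theta_j - \theta_i - \alpha)$ under a \emph{weak} mode of convergence of the controls: in both the fixed-$N$ continuity of the control-to-state map and the $\Gamma$-liminf inequality, one must pass to the limit in the product of a merely weakly convergent control with the solution. The rescue comes from the smooth, bounded interaction kernel and the compact embedding $W^{1,q}(S^1) \hookrightarrow C(S^1)$, which together reduce the non-locality to an average of a smooth function against a strongly convergent density. A secondary delicate point is obtaining the relative entropy estimate uniformly over $(u_1,u_2)\in\mathcal{U}$, which is precisely what the uniform $L^\infty$ bound (\ref{ubound}) and $W^{1,q}$ regularity are designed to deliver, since they provide Lipschitz control of the drift terms needed to invoke the framework of \cite{MR3858403}.
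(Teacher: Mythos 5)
Your overall architecture (direct method at fixed $N$, propagation of chaos uniform over $\mathcal{U}$, $\Gamma$-convergence with constant recovery sequences) is exactly the paper's, but two steps as written do not go through. First, the claim that the compact embedding $W^{1,q}(S^1)\hookrightarrow C(S^1)$ plus the bound (\ref{ubound}) gives strong convergence of the controls in $L^p(0,T;C(S^1))$ is false: the admissible set $\mathcal{U}$ carries no time regularity whatsoever, so a sequence such as $u^k(t,\theta)=\sin(kt)\phi(\theta)$ is bounded in $L^\infty(0,T;W^{1,q})$ yet converges only weakly-$*$; spatial compactness alone cannot produce compactness in $t$. Fortunately this step is not needed — as you note at the end, the products in the weak formulation are of the form (weakly convergent control) $\times$ (strongly convergent density), and that pairing is what the paper actually uses — but the argument should be stated that way rather than via a strong limit of the controls.

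Second, the mode of convergence you extract from the relative entropy method is both too weak and too strong in different respects. Csisz\'ar--Kullback--Pinsker yields $\|q^{N;1}-q\|_{L^\infty(0,T;L^1(S^1))}\le C/\sqrt N$, not $L^2([0,T]\times S^1)$ convergence; and $L^2$ in space--time would in any case not control the terminal term $\mathcal{J}_N^t$, which requires convergence at the single time $t=T$. To handle the quadratic tracking and terminal costs one must interpolate the $L^\infty_t L^1_\theta$ entropy bound against $N$-\emph{uniform} $L^\infty(0,T;H^1(S^1))$ (hence $L^\infty$) bounds on $q^{N;1}$ and on $q$ — this is precisely the content of the paper's Lemma \ref{minlimit} via (\ref{b2}). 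Your fixed-$N$ energy estimate on the full Liouville density over $(S^1)^N$ does not supply these: the constants there degenerate with $N$. The paper obtains the $N$-uniform bound by a separate hierarchy of estimates on the marginal equations (\ref{marginalequation}) and (\ref{secondmarginal}), where differentiating the non-local term forces one to first bound $\partial_\theta q^{N;2}$ in $L^2(0,T;L^2((S^1)^2))$. This marginal-level estimate, and the resulting $N$-independence (Remark \ref{conj.2}), is the missing ingredient your proposal needs to make the $N\to\infty$ stage rigorous.
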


The proof idea of the main result and the arrangement of the paper is as follows. In \textbf{Section 2} we prove solvability of the microscopic control problem. This gives us a sequence $(u_1^N, u_2^N)_{N \in \mathbb{N}}$, where each $u_1^N, u_2^N$ minimizes the corresponding cost functional $\mathcal{J}_N$. We aim to prove that any weak accumulation point $(\overline{u}_1,\overline{u}_2)$ of this sequence minimizes the cost functional $\mathcal{J}$. This proof is based on the idea of $\Gamma$-convergence which was first introduced by Ennio De Giorgi. We refer to \cite{MR2836339} for a source in English. In the first step we show inequality
\begin{align*}
\liminf_{k \rightarrow \infty} \mathcal{J}_{N_k} (q^{N_k} [u_1^{N_k},u_2^{N_k}],u_1^{N_k},u_2^{N_k}) \geq \mathcal{J} (q[\overline{u}_1,\overline{u}_2],\overline{u}_1,\overline{u}_2),
\end{align*}
which means that $\mathcal{J}$ provides an asymptotic lower bound for $\mathcal{J}_{N_k}$. A crucial tool for this step is the strong convergence of $q^{N;1} \rightarrow q$ in $L^\infty (0,T; L^1(S^1))$ for fixed control functions, which we prove in \textbf{Section 3}. Hereby we make use of the large deviation estimate given in \cite{MR3858403}. In the second step we show that $(\overline{u}_1,\overline{u}_2)$ minimizes the cost functional $\mathcal{J}$. This step is based on the following asymptotic result: For any sequence $(\hat{u}^N_1,\hat{u}^N_2)_{N \in \mathbb{N}}$ in $\mathcal{U}^2$ it holds
\begin{align*}
\lim_{N \rightarrow \infty} \left( \mathcal{J}_N (q^N [\hat{u}^N_1,\hat{u}^N_2],\hat{u}^N_1,\hat{u}^N_2) - \mathcal{J} (q [\hat{u}^N_1, \hat{u}^N_2], \hat{u}^N_1, \hat{u}^N_2) \right) = 0.
\end{align*}
We provide the details of this intermediate result and the main result in \textbf{Section 4}.  \\

As far as we are aware, no rigorous analysis of the mean-field optimal control problem for Kuramoto oscillators has been done so far. Although the interaction force is given by the sine function, which is bounded Lipschitz continuous, the fact that the Kuramoto oscillators naturally have range on the periodic domain $S^1$ does not allow direct application of existing results for mean-field optimal control on $\mathbb{R}$, e.g. \cite{chen2024meanfieldcontroldiffusionaggregation}.  
To overcome this issue, instead of considering the control problem for particle trajectories, we work directly with the corresponding Liouville equation. An established technique in mean-field control studies is using weak convergence of empirical measure, which relies on particle trajectories. Since we are working with the Liouville equation instead, we need to obtain the respective propagation of chaos result. For this reason, the strong $L^1$ convergence of the first marginal plays an important role in the discussion.

\section{Optimal Control Problem for the Liouville Equation}

In this section, we study the optimal control problem on the particle level. The well-posedness of the Liouville equation  (\ref{PDEqN}) for fixed $N \in \mathbb{N}$ can be obtained directly by classical results for linear second order parabolic equations, for example Theorem 4.2 in Chapter III of \cite{MR241822}. We denote its weak solution by $q[u_1,u_2]$. The first marginal $q^{N;1}[u_1,u_2]$ satisfies the following equation, by integrating equation (\ref{PDEqN}) with respect to $\theta_2, \dots, \theta_N$:
\begin{align} \label{marginalequation}
\begin{split}
&\partial_t q^{N;1}[u_1,u_2] - D \partial^2_\theta q^{N;1}[u_1,u_2] + \partial_\theta \left(  q^{N;1}[u_1,u_2] u_1 \right) \\
&+ \partial_\theta \left(\frac{N-1}{N} u_2 \int_{S^1} q^{N;2}[u_1,u_2](\theta, \tilde{\theta}) \sin (\tilde{\theta} - \theta - \alpha) \, d\tilde{\theta} + \frac{1}{N} u_2  \sin (-\alpha) q^{N;1}[u_1,u_2]  \right) = 0.
\end{split}
\end{align}

It is easy to notice that the equations (\ref{PDEqN}) and (\ref{marginalequation}) both conserve the total mass, and the corresponding solutions are therefore density functions as long as the initial data is a density function.  \\

Analogously, $q^{N;2}[u_1,u_2]$ represents the second marginal density of $q^N [u_1,u_2]$, i.e.
\begin{align*}
q^{N;2}[u_1,u_2] (t,\theta, \tilde{\theta}) = \int_{(S^1)^{N-2}} q^{N} [u_1,u_2] (t,\theta,\tilde{\theta},\theta_3, \dots, \theta_N) \, d\theta_3 \dots d\theta_N.
\end{align*}

The main result of this section is
\begin{proposition} \label{JNminimizer}
Assume that the initial data is probability density $q_0 \in H^1(S^1)$. The functional $\mathcal{J}_N$ admits a minimizer in \(\mathcal{U}^2\), i.e. there exists a pair $(u_1^*, u_2^*) \in \mathcal{U}^2$ such that
$$ \mathcal{J}_N (q^N[u_1^*, u_2^*], u_1^*, u_2^*) = \inf_{u_1, u_2 \in \mathcal{U}} \mathcal{J}_N (q^N[u_1, u_2], u_1, u_2).$$
\end{proposition}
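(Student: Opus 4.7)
The plan is to apply the direct method of the calculus of variations to $\mathcal{J}_N$. I start by choosing a minimizing sequence $(u_1^k, u_2^k)_{k \in \mathbb{N}} \subset \mathcal{U}^2$, i.e.\ $\mathcal{J}_N(q^N[u_1^k, u_2^k], u_1^k, u_2^k) \to \inf_{\mathcal{U}^2} \mathcal{J}_N$. Since $\mathcal{U}$ is the closed ball of radius $M$ in $L^\infty(0,T; W^{1,q}(S^1))$ with $q>2$, which is the dual of the separable space $L^1(0,T; (W^{1,q}(S^1))^*)$, the Banach--Alaoglu theorem provides a subsequence (not relabeled) with $u_i^k \rightharpoonup^{*} u_i^*$ in $L^\infty(0,T; W^{1,q}(S^1))$, and weak-$*$ closedness of the ball guarantees $(u_1^*, u_2^*) \in \mathcal{U}^2$. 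Because $W^{1,q}(S^1) \hookrightarrow C(S^1)$ for $q>1$ in one dimension, (\ref{ubound}) yields the uniform $L^\infty$ bound $\|u_i^k\|_{L^\infty([0,T]\times S^1)} \leq \tilde{M}$, and hence also $u_i^k \rightharpoonup^{*} u_i^*$ in $L^\infty([0,T]\times S^1)$.

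The next step is to obtain $k$-uniform estimates for $q^N_k := q^N[u_1^k, u_2^k]$. Testing the linear equation (\ref{PDEqN}) against $q^N_k$ and using $|u_i^k| \leq \tilde{M}$, $|\sin|\leq 1$, Young's inequality and Gr\"onwall should give bounds in $L^\infty(0,T; L^2((S^1)^N)) \cap L^2(0,T; H^1((S^1)^N))$, with the constants depending on $N$ and $\|q_0\|_{L^2}$ but not on $k$. Reading (\ref{PDEqN}) as an evolution equation then controls $\partial_t q^N_k$ uniformly in $L^2(0,T; H^{-1}((S^1)^N))$. The Aubin--Lions lemma extracts a subsequence with $q^N_k \to q^N_*$ strongly in $L^2(0,T; L^2((S^1)^N))$, and the continuous embedding into $C([0,T]; L^2)$ together with uniform equicontinuity in $H^{-1}$ yields weak $L^2$-convergence of the time slices, in particular $q^N_k(T,\cdot) \rightharpoonup q^N_*(T,\cdot)$ in $L^2((S^1)^N)$. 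To identify $q^N_*$ as the solution with the limiting controls, I pass to the limit in the weak formulation of (\ref{PDEqN}): the diffusive and time-derivative terms converge immediately by strong $L^2$-convergence, while the drift terms are handled by a weak-strong pairing argument. For each $i$ and each test function $\phi$, the auxiliary quantity $h_k(t,\theta):= \int_{(S^1)^{N-1}} q^N_k(t,\theta_1,\dots,\theta_N)\,\partial_{\theta_i}\phi\, d\theta_1\cdots d\theta_{i-1}d\theta_{i+1}\cdots d\theta_N$, together with its analog incorporating $\frac{1}{N}\sum_j \sin(\theta_j-\theta_i-\alpha)$, converges strongly in $L^1([0,T]\times S^1)$ (by Jensen applied to the strong $L^2$-convergence of $q^N_k$) and is uniformly bounded, so $\int_0^T \int_{S^1} u_i^k h_k \to \int_0^T \int_{S^1} u_i^* h_*$ by the weak-$*$/strong duality $L^\infty$--$L^1$. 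Uniqueness of weak solutions to the linear parabolic equation (\ref{PDEqN}) then forces $q^N_* = q^N[u_1^*, u_2^*]$.

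Finally, lower semi-continuity of $\mathcal{J}_N$ follows termwise. The control cost $\mathcal{J}_N^u$ is convex and continuous on $L^2([0,T]\times S^1)$, hence weakly lower semi-continuous, yielding $\liminf_k \mathcal{J}_N^u(u_1^k, u_2^k) \geq \mathcal{J}_N^u(u_1^*, u_2^*)$. The running cost $\mathcal{J}_N^r$ is in fact continuous because the strong $L^2$-convergence of $q^N_k$ transfers to the first marginal $q^{N;1}_k \to q^{N;1}_*$ in $L^2([0,T]\times S^1)$ by Jensen. The terminal cost $\mathcal{J}_N^t$ is weakly lower semi-continuous using the weak $L^2$-convergence $q^{N;1}_k(T,\cdot) \rightharpoonup q^{N;1}_*(T,\cdot)$ together with the convexity and continuity of $v \mapsto \frac{\alpha_t}{2}\|v-z(T,\cdot)\|_{L^2}^2$. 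Summing the three contributions gives $\mathcal{J}_N(q^N[u_1^*, u_2^*], u_1^*, u_2^*) \leq \liminf_k \mathcal{J}_N(q^N_k, u_1^k, u_2^k) = \inf_{\mathcal{U}^2} \mathcal{J}_N$, so $(u_1^*, u_2^*)$ is the desired minimizer.

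The main technical obstacle I anticipate is precisely the weak-strong pairing in the bilinear drift, since the controls only converge in a weak-$*$ sense and the interaction is a product of $u_2^k$ with a nonlocal functional of $q^N_k$; the argument above works because the linearity of (\ref{PDEqN}) in the state gives genuine strong compactness of $q^N_k$ via Aubin--Lions, which compensates for the merely weak convergence of the controls. The remaining ingredients---energy estimates, parabolic trace, and convex lower semi-continuity---are standard once this compactness is established.
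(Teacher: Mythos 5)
Your proof is correct, and it follows the same direct-method skeleton as the paper (minimizing sequence, weak-$*$ compactness of $\mathcal{U}$, compactness of the state, termwise lower semicontinuity), but the compactness step is handled along a genuinely different and more economical route. The paper never works with the full joint density: it derives the marginal equations (\ref{marginalequation}) and (\ref{secondmarginal}), proves a uniform $L^2(0,T;H^1((S^1)^2))$ bound on $q^{N;2}$ and then, via a second-order energy estimate using the $W^{1,q}$ regularity of the controls and a Gagliardo--Nirenberg interpolation, a uniform $L^\infty(0,T;H^1(S^1))$ bound on $q^{N;1}$, so that Aubin--Lions gives \emph{strong} convergence of $q^{N;1}[u_1^j,u_2^j]$ in $C(0,T;L^2(S^1))$; the terminal cost $\mathcal{J}_N^t$ then passes to the limit by strong convergence of the time slice at $t=T$. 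You instead run only the first-order energy estimate on the full density $q^N$ on $(S^1)^N$, get strong $L^2_{t,x}$ convergence (hence of the marginals by Cauchy--Schwarz rather than Jensen, a cosmetic point), and treat $\mathcal{J}_N^t$ by weak $L^2$ convergence of $q^N_k(T,\cdot)$ (via the uniform $L^\infty_tL^2$ bound plus $H^{-1}$-equicontinuity) combined with convexity and weak lower semicontinuity of $v\mapsto\|v-z(T)\|_{L^2}^2$. This trades the paper's convergence of $I_2$ for a mere $\liminf$ inequality, which is all the direct method needs, and it avoids the delicate higher-order estimates entirely; your weak-$*$/strong pairing for the bilinear drift (using $L^1(S^1)\hookrightarrow (W^{1,q}(S^1))^*$) is also sound. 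The only thing your shortcut gives up is the $N$-independent $L^\infty(0,T;H^1(S^1))$ bound on $q^{N;1}$, which the paper explicitly flags in the Remark after the proposition and reuses in (\ref{b2}) to prove Lemma \ref{minlimit}; so while your argument fully proves Proposition \ref{JNminimizer}, the paper's heavier estimates are not redundant in the overall scheme.
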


\begin{proof}
Let $(u_1^j, u_2^j)_{j \in \mathbb{N}} \in \mathcal{U}^2$ be a minimizing sequence of the given optimization problem, namely
\begin{align*}
\lim_{j \rightarrow \infty} \mathcal{J}_N (q^N [u_1^j, u_2^j],u_1^j, u_2^j) = \inf_{u_1,u_2 \in \mathcal{U}} \mathcal{J}_N (q^N [u_1, u_2],u_1, u_2).
\end{align*}
The sequence $(u_1^j, u_2^j)_{j \in \mathbb{N}}$ is bounded in $(L^\infty (0,T; W^{1,q}(S^1)))^2$. Hence there is a weakly* convergent (not relabeled) subsequence with weak limit $(u_1^*, u_2^*) \in \mathcal{U}^2$. We aim to show that $(u_1^*, u_2^*)$ minimizes the cost functional (\ref{JNfunctional}), i.e.
\begin{align} \label{ineq3}
\lim_{j \rightarrow \infty} \mathcal{J}_N (q^N [u_1^j, u_2^j], u_1^j, u_2^j) \geq \mathcal{J}_N (q^N [u_1^*, u_2^*], u_1^*, u_2^*).
\end{align}
Given its structure, we exploit lower semicontinuity of the $L^2(0,T; L^2(S^1))$ norm to deal with the term $\mathcal{J}_N^u$. The terms $\mathcal{J}_N^r$ and $\mathcal{J}_N^t$ require strong convergence of $\left(q^{N;1} [u_1^j, u_2^j] \right)_{j \in \mathbb{N}}$ in $C(0,T;L^2(S^1))$. Continuity with respect to the time parameter is crucial, since the term $\mathcal{J}_N^t$ in the cost functional pays special attention to the final time $T$. Moreover, in our case strong convergence is necessary to take limits in terms $\mathcal{J}_N^r$ and $\mathcal{J}_N^t$, whereas weak convergence would be sufficient without the presence of control functions in the model. We use Aubins-Lions Lemma to obtain strong convergence of a subsequence in $C(0,T;L^2(S^1))$. In order to do that, we need to show that the sequences $\left( q^{N;1}[u_1^j,u_2^j] \right)_{j \in \mathbb{N}}$ and $\left( \partial_\theta q^{N;1}[u_1^j,u_2^j] \right)_{j \in \mathbb{N}}$ are both bounded in $L^\infty (0,T; L^2(S^1))$. The first one is fairly straightforward, whereas the second one requires additional steps and techniques because in this case the derivative is applied to the nonlocal term. An important step is deriving equations corresponding to $q^{N;2} [u_1^j, u_2^j]$ and showing that the sequence $\left( \partial_{\theta} q^{N;2} [u_1^j, u_2^j] \right)_{j \in \mathbb{N}}$ is bounded in $L^2(0,T; H^1((S^1)^2))$. \\

We start by noticing that by integrating equation (\ref{PDEqN}) with respect to $\theta_3, \dots, \theta_N$, $q^{N;2} [u_1^j, u_2^j]$ satisfies
\begin{align} \label{secondmarginal}
\begin{split}
&\partial_t q^{N;2}[u_1,u_2](\theta, \tilde{\theta}) - D \left( \partial^2_\theta q^{N;2} [u_1,u_2](\theta,\tilde{\theta}) +\partial^2_{\tilde{\theta}} q^{N;2}[u_1,u_2] (\theta,\tilde{\theta}) \right) \\
&+ \partial_\theta \left(  q^{N;2}[u_1,u_2](\theta,\tilde{\theta}) u_1 (\theta) \right)  + \partial_{\tilde{\theta}} \left(  q^{N;2}[u_1,u_2](\theta,\tilde{\theta}) u_1 (\tilde{\theta}) \right)\\
&+ \partial_\theta \left(\frac{N-2}{N} u_2 (\theta) \int_{S^1} q^{N;3}[u_1,u_2] (\theta, \tilde{\theta},\hat{\theta}) \sin (\hat{\theta} - \theta - \alpha) \, d\hat{\theta} \right)  \\
&+ \frac{1}{N} \partial_\theta \left( u_2 (\theta) \sin (-\alpha) q^{N;2} [u_1,u_2](\theta, \tilde{\theta}) + u_2 (\theta) \sin (\tilde{\theta} - \theta -\alpha) q^{N;2} [u_1,u_2](\theta, \tilde{\theta}) \right)   \\
&+ \partial_{\tilde{\theta}} \left(\frac{N-2}{N} u_2 (\tilde{\theta}) \int_{S^1} q^{N;3}[u_1,u_2] (\theta, \tilde{\theta},\hat{\theta}) \sin (\hat{\theta} - \tilde{\theta} - \alpha) \, d\hat{\theta} \right) \\
&+ \frac{1}{N} \partial_{\tilde{\theta}} \left( u_2 (\tilde{\theta}) \sin (-\alpha) q^{N;2} [u_1,u_2](\theta, \tilde{\theta}) + u_2 (\tilde{\theta}) \sin (\theta - \tilde{\theta} -\alpha) q^{N;2}[u_1,u_2] (\theta, \tilde{\theta}) \right) = 0,
\end{split}
\end{align}
where $q^{N;3}[u_1,u_2]$ represents the third marginal of $q^N[u_1,u_2]$, i.e.
\begin{align*}
q^{N;3}[u_1,u_2] (t,\theta, \tilde{\theta}, \hat{\theta}) = \int_{(S^1)^{N-3}} q^{N} [u_1,u_2] (t,\theta,\tilde{\theta}, \hat{\theta}, \theta_4, \dots, \theta_N) \, d\theta_4 \dots d\theta_N.
\end{align*}

We multiply the equation (\ref{secondmarginal}) by $q^{N;2} [u_1^j, u_2^j]$ and integrate by parts to obtain
\begin{align*}
\begin{split}
&\frac{1}{2} \frac{d}{dt} \int_{(S^1)^2} (q^{N;2}[u_1^j, u_2^j](\theta, \tilde{\theta}))^2 \, d\theta d\tilde{\theta} +  D \int_{(S^1)^2} ( \partial_\theta q^{N;2}[u_1^j, u_2^j] (\theta, \tilde{\theta}))^2 + ( \partial_{\tilde{\theta}} q^{N;2}[u_1^j, u_2^j](\theta, \tilde{\theta}))^2  \, d\theta d\tilde{\theta} \\
= & \int_{(S^1)^2} \left( q^{N;2}[u_1^j, u_2^j](\theta, \tilde{\theta}) u^j_1(\theta) \partial_\theta q^{N;2}[u_1^j, u_2^j] (\theta, \tilde{\theta}) + q^{N;2}[u_1^j, u_2^j](\theta, \tilde{\theta}) u^j_1 (\tilde{\theta}) \partial_{\tilde{\theta}} q^{N;2}[u_1^j, u_2^j] (\theta, \tilde{\theta}) \right) \, d\theta d\tilde{\theta} \\
+ & \int_{(S^1)^2} \frac{N-2}{N} u_2^j (\theta) \left( \int_{S^1} q^{N;3}[u_1^j, u_2^j](\theta, \tilde{\theta}, \hat{\theta}) \sin (\hat{\theta} - \theta - \alpha) \, d\hat{\theta} \right)  \partial_\theta q^{N;2}[u_1^j, u_2^j] (\theta,\tilde{\theta})   \,  d\theta d\tilde{\theta} \\
+ & \int_{(S^1)^2} \frac{N-2}{N} u_2^j (\tilde{\theta}) \left( \int_{S^1} q^{N;3}[u_1^j, u_2^j](\theta, \tilde{\theta}, \hat{\theta}) \sin (\hat{\theta} - \tilde{\theta} - \alpha) \, d\hat{\theta} \right) \partial_{\tilde{\theta}} q^{N;2}[u_1^j, u_2^j] (\theta,\tilde{\theta})   \,  d\theta d\tilde{\theta} \\
+ &\frac{1}{N} \int_{(S^1)^2} \left( u_2^j (\theta) \sin (-\alpha) q^{N;2}[u_1^j, u_2^j] (\theta,\tilde{\theta}) + u_2^j (\theta) \sin (\tilde{\theta} - \theta -\alpha) q^{N;2}[u_1^j, u_2^j] (\theta,\tilde{\theta}) \right) \partial_\theta q^{N;2}[u_1^j, u_2^j] \,    d\theta d\tilde{\theta} \\
+ &\frac{1}{N} \int_{(S^1)^2} \left( u_2^j (\tilde{\theta}) \sin (-\alpha) q^{N;2}[u_1^j, u_2^j] (\theta,\tilde{\theta}) + u_2^j (\tilde{\theta}) \sin (\theta - \tilde{\theta} -\alpha) q^{N;2}[u_1^j, u_2^j] (\theta,\tilde{\theta}) \right) \partial_{\tilde{\theta}} q^{N;2}[u_1^j, u_2^j] \,    d\theta d\tilde{\theta}.
\end{split}
\end{align*}

We integrate this equation on $[0,t]$ for every $t\in (0,T]$ and use (\ref{ubound}) to obtain
\begin{align*}
\begin{split}
&\frac{1}{2} \int_{(S^1)^2} (q^{N;2}[u_1^j, u_2^j](t))^2 \, d\theta d\tilde{\theta} - \frac{1}{2} \int_{(S^1)^2} (q^{N;2}[u_1^j, u_2^j](0))^2 \, d\theta d\tilde{\theta} \\
&+  D \int_0^t \int_{(S^1)^2} ( \partial_\theta q^{N;2}[u_1^j, u_2^j])^2  + ( \partial_{\tilde{\theta}} q^{N;2}[u_1^j, u_2^j])^2 \, d\theta d\tilde{\theta} ds \\
\leq &  ||q^{N;2}[u_1^j, u_2^j]||_{L^2(0,t;L^2((S^1)^2))} ||u_1^j||_{L^\infty(0,t;L^\infty(S^1))}  ||\partial_\theta q^{N;2}[u_1^j, u_2^j]||_{L^2(0,t;L^2((S^1)^2))} \\
&+ ||q^{N;2}[u_1^j, u_2^j]||_{L^2(0,t;L^2((S^1)^2))} ||u_1^j||_{L^\infty(0,t;L^\infty(S^1))}  ||\partial_{\tilde{\theta}} q^{N;2}[u_1^j, u_2^j]||_{L^2(0,t;L^2((S^1)^2))} \\
&+ ||q^{N;2}[u_1^j, u_2^j]||_{L^2(0,t;L^2((S^1)^2))} ||u_2^j||_{L^\infty(0,t;L^\infty(S^1))}  ||\partial_\theta q^{N;2}[u_1^j, u_2^j]||_{L^2(0,t;L^2((S^1)^2))} \\
&+  ||q^{N;2}[u_1^j, u_2^j]||_{L^2(0,t;L^2((S^1)^2))} ||u_2^j||_{L^\infty(0,t;L^\infty(S^1))}  ||\partial_{\tilde{\theta}} q^{N;2}[u_1^j, u_2^j]||_{L^2(0,t;L^2((S^1)^2))} \\
\leq & 2\tilde{M} ||q^{N;2}[u_1^j, u_2^j]||_{L^2(0,t;L^2((S^1)^2))} \left( ||\partial_\theta q^{N;2}[u_1^j, u_2^j]||_{L^2(0,t;L^2((S^1)^2))}  + ||\partial_{\tilde{\theta}} q^{N;2}[u_1^j, u_2^j]||_{L^2(0,t;L^2((S^1)^2))} \right) \\
\leq & \frac{\tilde{M}}{D} ||q^{N;2}[u_1^j, u_2^j]||_{L^2(0,t;L^2((S^1)^2))}^2 \\
&+ \frac{D}{2} \left( ||\partial_\theta q^{N;2}[u_1^j, u_2^j]||_{L^2(0,t;L^2((S^1)^2))}^2 +  ||\partial_{\tilde{\theta}} q^{N;2}[u_1^j, u_2^j]||^2_{L^2(0,t;L^2((S^1)^2))}  \right) ,
\end{split}
\end{align*}
where we used Young's inequality in the last step. Now we can absorb the second term on the right-hand side and use Grönwall's inequality to obtain
\begin{align*}
||q^{N;2}[u_1^j,u_2^j](t)||_{L^2((S^1)^2)}^2 \leq C, \ \forall t \in (0,T],
\end{align*}
where the constant $C>0$ does not depend on $j$. Moreover, we see directly that it holds
\begin{align} \label{secmarineq}
||\partial_\theta q^{N;2}[u_1^j,u_2^j]||_{L^2(0,T;L^2((S^1)^2))}^2 +  ||\partial_{\tilde{\theta}} q^{N;2}[u_1^j, u_2^j]||^2_{L^2(0,T;L^2((S^1)^2))} \leq C
\end{align}
with a uniform constant $C>0$. Hence, the sequence $\left( q^{N;2} [u_1^j, u_2^j] \right)_{j \in \mathbb{N}}$ is bounded in $L^2(0,T; H^1((S^1)^2))$. \\

Now we multiply the equation (\ref{marginalequation}) by $q^{N;1} [u_1^j, u_2^j]$, integrate by parts and use (\ref{ubound}) to obtain
\begin{align} \label{particle1}
\begin{split}
&\frac{1}{2} \frac{d}{dt} \int_{S^1} (q^{N;1}[u_1^j, u_2^j])^2 \, d\theta  + D \int_{S^1} (\partial_\theta q^{N;1}[u_1^j, u_2^j])^2 \, d\theta  \\
= & \int_{S^1} \left( q^{N;1} [u_1^j, u_2^j]  u_1^j +  \frac{N-1}{N}u_2^j  \int_{S^1} q^{N;2} [u_1^j, u_2^j]  \sin (\tilde{\theta} - \theta - \alpha) \, d\tilde{\theta} \right) \partial_\theta q^{N;1} [u_1^j, u_2^j] \, d\theta \\
&+ \frac{1}{N} \int_{S^1} u_2^j \sin (-\alpha) q^{N;1} [u_1^j,u_2^j] \partial_\theta q^{N;1} [u_1^j, u_2^j] \, d\theta \\
\leq & 2\tilde{M} ||q^{N;1} [u_1^j, u_2^j](t)||_{L^2(S^1)} ||\partial_\theta q^{N;1} [u_1^j, u_2^j] (t)||_{L^2(S^1)} \\
\leq & \frac{\tilde{M}}{D} ||q^{N;1} [u_1^j, u_2^j](t)||_{L^2(S^1)}^2 + \frac{D}{2} ||\partial_\theta q^{N;1} [u_1^j, u_2^j] (t)||^2_{L^2(S^1)},
\end{split}
\end{align}
where we used Young's inequality again. After absorbing the second term on the right-hand side, we use Grönwall's inequality to obtain
\begin{align} \label{bound1}
|| q^{N;1} [u_1^j, u_2^j](t)||^2_{L^2(S^1)} \leq C, \ \forall t \in (0,T],
\end{align}
where the constant $C$ does not depend on $j$.\\ 

Now we differentiate the equation (\ref{marginalequation}) with respect to $\theta$, multiply it by $\partial_\theta q^{N;1}[u_1^j, u_2^j]$ and integrate by parts to obtain
\begin{align*}
&\frac{1}{2} \frac{d}{dt} \int_{S^1} (\partial_\theta q^{N;1}[u_1^j, u_2^j])^2 \, d\theta  + D \int_{S^1} (\partial_\theta^2 q^{N;1}[u_1^j, u_2^j])^2 \, d\theta  \\
= & \int_{S^1} \partial_\theta q^{N;1} [u_1^j, u_2^j] u_1^j \partial^2_\theta q^{N;1} [u_1^j, u_2^j] \, d\theta + \int_{S^1} q^{N;1} [u_1^j, u_2^j] \partial_\theta u_1^j  \partial^2_\theta q^{N;1} [u_1^j, u_2^j] \, d\theta  \\
&+ \int_{S^1} \frac{N-1}{N} \partial_\theta u_2^j \left( \int_{S^1} q^{N;2} [u_1^j, u_2^j]  \sin (\tilde{\theta} - \theta - \alpha) \, d\tilde{\theta} \right) \partial^2_\theta q^{N;1} [u_1^j, u_2^j] \, d\theta  \\
&+ \int_{S^1} \frac{N-1}{N} u_2^j \left( \int_{S^1}   \partial_\theta q^{N;2} [u_1^j, u_2^j] \sin (\tilde{\theta} - \theta - \alpha) \, d\tilde{\theta} \right) \partial^2_\theta q^{N;1} [u_1^j, u_2^j] \, d\theta \\
&- \int_{S^1} \frac{N-1}{N} u_2^j \left( \int_{S^1}    q^{N;2} [u_1^j, u_2^j] \cos (\tilde{\theta} - \theta - \alpha) \, d\tilde{\theta} \right) \partial^2_\theta q^{N;1} [u_1^j, u_2^j] \, d\theta \\
&+ \frac{1}{N} \int_{S^1} \partial_\theta u_2^j \sin (-\alpha) q^{N;1} [u_1^j, u_2^j]   \partial^2_\theta q^{N;1} [u_1^j, u_2^j]  \, d\theta + \frac{1}{N} \int_{S^1} u_2^j  \sin (-\alpha) \partial_\theta q^{N;1} [u_1^j, u_2^j]   \partial^2_\theta q^{N;1} [u_1^j, u_2^j] \, d\theta.
\end{align*}

Integrating on $[0,t]$ for $t \in (0,T]$, we obtain
\begin{align*}
&\frac{1}{2} ||\partial_\theta q^{N;1}[u_1^j, u_2^j](t)||^2_{L^2(S^1)} - \frac{1}{2} ||\partial_\theta q^{N;1}[u_1^j, u_2^j](0)||^2_{L^2(S^1)} + D ||\partial_\theta^2 q^{N;1}[u_1^j, u_2^j]||^2_{L^2(0,T;L^2(S^1))} \\
\leq & || \partial_\theta q^{N;1} [u_1^j, u_2^j]||_{L^2(0,t; L^2(S^1))} ||u_1^j||_{L^\infty(0,t; L^\infty(S^1))} ||\partial^2_\theta q^{N;1} [u_1^j, u_2^j] ||_{L^2(0,t; L^2(S^1))} \\
&+  ||q^{N;1} [u_1^j, u_2^j]||_{L^2(0,t; L^{\frac{2q}{q-2}}(S^1))}  ||\partial_\theta u_1^j||_{L^\infty(0,t; L^q(S^1))} ||\partial^2_\theta q^{N;1} [u_1^j, u_2^j] ||_{L^2(0,t; L^2(S^1))}  \\
&+ \frac{N-1}{N} ||\partial_\theta u_2^j ||_{L^\infty(0,t; L^q(S^1))} || q^{N;1} [u_1^j, u_2^j]||_{L^2(0,t; L^{\frac{2q}{q-2}}(S^1))} ||  \partial^2_\theta q^{N;1} [u_1^j, u_2^j]||_{L^2(0,t; L^2(S^1))}   \\
&+ \frac{N-1}{N} ||u_2^j ||_{L^\infty(0,t; L^\infty(S^1))}  \left\vert \left\vert \int_{S^1}   \partial_\theta q^{N;2} [u_1^j, u_2^j] \sin (\tilde{\theta} - \theta - \alpha) \, d\tilde{\theta} \right\vert \right\vert_{L^2(0,t; L^2(S^1))} \\
  &\cdot ||\partial^2_\theta q^{N;1} [u_1^j, u_2^j] ||_{L^2(0,t; L^2(S^1))}  \\
&+ \frac{N-1}{N} ||u_2^j ||_{L^\infty(0,t; L^\infty(S^1))}  ||q^{N;1} [u_1^j, u_2^j]||_{L^2(0,t; L^2(S^1))}   ||\partial^2_\theta q^{N;1} [u_1^j, u_2^j]|| _{L^2(0,t; L^2(S^1))}  \\
&+ \frac{1}{N} || \partial_\theta u_2^j ||_{L^\infty(0,t; L^q(S^1))}  ||q^{N;1}[u_1^j, u_2^j]||_{L^2(0,t; L^{\frac{2q}{q-2}}(S^1))}  ||\partial^2_\theta q^{N;1} [u_1^j, u_2^j]||_{L^2(0,t; L^2(S^1))}  \\
&+ \frac{1}{N} ||u_2^j ||_{L^\infty(0,t; L^\infty(S^1))}  ||\partial_\theta  q^{N;1}[u_1^j, u_2^j] ||_{L^2(0,t; L^2(S^1))}  ||\partial^2_\theta q^{N;1} [u_1^j, u_2^j] ||_{L^2(0,t; L^2(S^1))}.
\end{align*}

Using Gagliardo-Nirenberg inequality, we observe
\begin{align*}
||q^{N;1} [u_1^j, u_2^j]||^2_{L^2(0,t; L^{\frac{2q}{q-2}}(S^1))} \leq & \int_0^t C ||\partial_\theta q^{N;1} [u_1^j, u_2^j](s) ||_{ L^2(S^1)}^{\frac{2}{q}} ||q^{N;1} [u_1^j, u_2^j](s) ||_{ L^2(S^1)}^{\frac{2q-2}{q}} \, ds \\
\leq & \int_0^t C ||\partial_\theta q^{N;1} [u_1^j, u_2^j](s) ||_{ L^2(S^1)}^{\frac{2}{q}}  \, ds \\
\leq &\int_0^t C (1 + ||\partial_\theta q^{N;1} [u_1^j, u_2^j](s) ||_{ L^2(S^1)}^2)^\frac{1}{q}  \, ds \\
\leq & \int_0^t C (1 + ||\partial_\theta q^{N;1} [u_1^j, u_2^j](s) ||_{ L^2(S^1)}^2)  \, ds \\
=& CT + C ||\partial_\theta q^{N;1} [u_1^j, u_2^j]||^2_{L^2(0,t;L^2(S^1))} \\
\leq &C\left( 1+  ||\partial_\theta q^{N;1} [u_1^j, u_2^j](s) ||_{ L^2(S^1)}^2\right)
\end{align*}
for some constant $C>0$, since we know from (\ref{bound1}) that $||q^{N;1} [u_1^j, u_2^j] ||_{L^\infty(0,t; L^2(S^1))}$ are uniformly bounded. Moreover, we can estimate
\begin{align*}
&\left\vert \left\vert \int_{S^1}   \partial_\theta q^{N;2} [u_1^j, u_2^j] (t,\theta, \tilde{\theta}) \sin (\tilde{\theta} - \theta - \alpha) \, d\tilde{\theta} \right\vert \right\vert_{L^2(0,T; L^2(S^1))}^2 \\
\leq & \int_0^t \int_{S^1} \left( \int_{S^1}   (\partial_\theta q^{N;2} [u_1^j, u_2^j])^2 \, d\tilde{\theta} \right) \left( \int_{S^1} ( \sin (\tilde{\theta} - \theta - \alpha))^2 \, d\tilde{\theta} \right) \, d\theta ds \\
\leq &  \int_0^t \int_{(S^1)^2} 2\pi (\partial_\theta q^{N;2} [u_1^j, u_2^j])^2 \, d\tilde{\theta} d\theta ds \leq  C,
\end{align*}
since we have already established (\ref{secmarineq}). Therefore we can conclude
\begin{align*}
&\frac{1}{2} ||\partial_\theta q^{N;1}[u_1^j, u_2^j](t)||^2_{L^2(S^1)} - \frac{1}{2} ||\partial_\theta q^{N;1}[u_1^j, u_2^j](0)||^2_{L^2(S^1)} + D ||\partial_\theta^2 q^{N;1}[u_1^j, u_2^j]||_{L^2(0,t;L^2(S^1))}^2 \\
\leq & \tilde{M} || \partial_\theta q^{N;1} [u_1^j, u_2^j]||_{L^2(0,t; L^2(S^1))} ||\partial^2_\theta q^{N;1} [u_1^j, u_2^j]||_{L^2(0,t; L^2(S^1))} \\
+ &  M || q^{N;1} [u_1^j, u_2^j]||_{L^2(0,t; L^{\frac{2q}{q-2}} (S^1)} ||\partial^2_\theta q^{N;1} [u_1^j, u_2^j] ||_{L^2(0,t; L^2(S^1))}  \\
+ & \frac{N-1}{N}  M  ||q^{N;1} [u_1^j, u_2^j]||_{L^2(0,t; L^{\frac{2q}{q-2}}(S^1))} ||  \partial^2_\theta q^{N;1} [u_1^j, u_2^j]||_{L^2(0,t; L^2(S^1))}   \\
+ & \frac{N-1}{N} C \tilde{M}  ||\partial^2_\theta q^{N;1} [u_1^j, u_2^j] ||_{L^2(0,t; L^2(S^1))} +  \frac{N-1}{N} C\tilde{M} ||\partial^2_\theta q^{N;1} [u_1^j, u_2^j]||_{L^2(0,t; L^2(S^1))}  \\
+& \frac{1}{N} M ||q^{N;1}[u_1^j, u_2^j]||_{L^2(0,t; L^{\frac{2q}{q-2}}(S^1))}  ||\partial^2_\theta q^{N;1} [u_1^j, u_2^j]||_{L^2(0,t; L^2(S^1))}  \\
+& \frac{1}{N} \tilde{M}  ||\partial_\theta  q^{N;1}[u_1^j, u_2^j] ||_{L^2(0,t; L^2(S^1))}  ||\partial^2_\theta q^{N;1} [u_1^j, u_2^j] ||_{L^2(0,t; L^2(S^1))} 
\end{align*}

Therefore
\begin{align*}
&\frac{1}{2} ||\partial_\theta q^{N;1}[u_1^j, u_2^j](t)||^2_{L^2(S^1)} - \frac{1}{2} ||\partial_\theta q^{N;1}[u_1^j, u_2^j](0)||^2_{L^2(S^1)} + D ||\partial_\theta^2 q^{N;1}[u_1^j, u_2^j]||_{L^2(0,t;L^2(S^1))}^2 \\
\leq & \tilde{M} \left( C_\epsilon || \partial_\theta q^{N;1} [u_1^j, u_2^j]||_{L^2(0,t; L^2(S^1))}^2 + \epsilon ||\partial^2_\theta q^{N;1} [u_1^j, u_2^j]||_{L^2(0,t; L^2(S^1))}^2 \right) \\
+ & M \left( C_\epsilon \left(1+ ||\partial_\theta q^{N;1} [u_1^j, u_2^j]||_{L^2(0,t; L^2 (S^1)}^2 \right) + \epsilon||\partial^2_\theta q^{N;1} [u_1^j, u_2^j] ||_{L^2(0,t; L^2(S^1))}^2 \right)  \\
+ & \frac{N-1}{N} M \left( C_\epsilon \left( 1+ ||\partial_\theta q^{N;1} [u_1^j, u_2^j]||^2_{L^2(0,t;L^2 (S^1))} \right) + \epsilon ||  \partial^2_\theta q^{N;1} [u_1^j, u_2^j]||_{L^2(0,t; L^2(S^1))}^2 \right)   \\
+ & \frac{N-1}{N} \sqrt{2C\pi}\tilde{M} \left( C_\epsilon + \epsilon  ||\partial^2_\theta q^{N;1} [u_1^j, u_2^j] ||_{L^2(0,t; L^2(S^1))}^2 \right) \\
 +  &\frac{N-1}{N} C \tilde{M} \left( C_\epsilon + \epsilon||\partial^2_\theta q^{N;1} [u_1^j, u_2^j] ||_{L^2(0,t; L^2(S^1))}^2 \right)  \\
+& \frac{1}{N} M \left( C_\epsilon \left( 1+ ||\partial_\theta q^{N;1}||_{L^2(0,t; L^2(S^1))}^2 \right) +  \epsilon||\partial^2_\theta q^{N;1} [u_1^j, u_2^j] ||_{L^2(0,t; L^2(S^1))}^2 \right)  \\
+& \frac{1}{N} \tilde{M} \left( C_\epsilon ||\partial_\theta  q^{N;1} ||_{L^2(0,t; L^2(S^1))}^2 + \epsilon ||\partial^2_\theta q^{N;1} [u_1^j, u_2^j] ||_{L^2(0,t; L^2(S^1))}^2 \right)  \\
\leq & C + C ||\partial_\theta q^{N;1}||_{L^2(0,t; L^2(S^1))}^2 \\
+ &\epsilon \left( 2M + \frac{N+1}{N} \tilde{M} + \frac{N-1}{N} C\tilde{M} + \frac{N-1}{N} \sqrt{2C\pi} \tilde{M} \right) ||\partial^2_\theta q^{N;1} [u_1^j, u_2^j] ||_{L^2(0,t; L^2(S^1))}^2
\end{align*}
for every $\epsilon>0$ and for some constant $C_\epsilon >0$ and some constant $C>0$ that does not depend on $j$ or $t$. Choosing $\epsilon$ as
\begin{align*}
\epsilon = \frac{D}{2}  \left( 2M + 2 \tilde{M} + C \tilde{M} +  \sqrt{2C\pi} \tilde{M} \right)^{-1},
\end{align*}
we can absorb the term $||\partial_\theta^2 q^{N;1}[u_1^j, u_2^j]||_{L^2(0,t;L^2(S^1))}^2$ and after applying (\ref{secmarineq}) and Grönwall's inequality, we obtain
\begin{align*}
||\partial_\theta q^{N;1}[u_1^j, u_2^j](t)||^2_{L^2(S^1)} \leq C, \ \forall t \in (0,T]
\end{align*}
uniformly in $j$. Therefore the sequence $(q^{N;1}[u_1^j, u_2^j])_{j\in \mathbb{N}}$ is uniformly bounded in $L^\infty (0,T; H^1(S^1))$. Using this fact, the PDE (\ref{marginalequation}) and the space of control functions (\ref{cfspace}), we get immediately that the sequence $(\partial_t q^{N;1} [u_1^j, u_2^j])_{j\in \mathbb{N}}$ is bounded in $L^2 (0,T; H^{-1}(S^1))$. Then we use the Aubin-Lions Lemma and obtain a strongly convergent (not relabeled) subsequence
\begin{align*}
q^{N;1}[u_1^j, u_2^j] \rightarrow q^{N;1} \ \mathrm{in} \  C (0,T; L^2 (S^1)).
\end{align*}
Furthermore, it is easy to obtain that $q^{N;1} = q^{N;1} [u_1^*,u_2^*]$ by taking limits term by term in the weak formulation of (\ref{marginalequation}) and using uniqueness of solution of (\ref{marginalequation}). We omit the details of this standard argument. \\

We are now ready to prove (\ref{ineq3}). It holds
\begin{align*}
&\mathcal{J}_N (q^N [u_1^j,u_2^j], u_1^j, u_2^j) - \mathcal{J}_N (q^N [u_1^*,u_2^*],u_1^*, u_2^*) \\
= &\frac{\alpha_r}{2} \int_0^T \int_{S^1} \left((q^{N;1}[u_1^j, u_2^j] (t,\theta) - z(t,\theta))^2 - (q^{N;1}[u_1^*,u_2^*] (t,\theta) - z(t,\theta))^2 \right) \, d\theta dt \\ 
+ &\frac{\alpha_t}{2} \int_{S^1} \left((q^{N;1}[u_1^j, u_2^j] (T,\theta) - z(T,\theta))^2 - (q^{N;1}[u_1^*,u_2^*] (T,\theta) - z(T,\theta))^2 \right) \, d\theta \\ 
+ &\frac{1}{2} \int_0^T \int_{S^1} \left[ \beta_1 \left( (u_1^j (t,\theta))^2 - (u_1^* (t,\theta))^2 \right)  + \beta_2 \left( (u_2^j(t,\theta))^2 - (u_2^*(t,\theta))^2 \right) \right] \, d\theta dt \\
=: & I_1 + I_2 + I_3.
\end{align*}
We rewrite the terms $I_1$ and $I_2$
\begin{align*}
I_1 \leq &\frac{\alpha_r}{2} ||q^{N;1}[u_1^j, u_2^j] - q^{N;1}[u_1^*, u_2^*]||_{L^2(0,T; L^2 (S^1))} \\
&\left( ||q^{N;1}[u_1^j, u_2^j] - z||_{L^2(0,T; L^2 (S^1))} + ||q^{N;1}[u_1^*, u_2^*] - z||_{L^2(0,T; L^2 (S^1))} \right) 	, \\
I_2 \leq &\frac{\alpha_t}{2} ||q^{N;1}[u_1^j, u_2^j](T) - q^{N;1}[u_1^*, u_2^*](T)||_{L^2 (S^1)}  \\
&\left( ||q^{N;1}[u_1^j, u_2^j](T) - z(T)||_{L^2 (S^1)} + ||q^{N;1}[u_1^*, u_2^*](T) - z(T)||_{L^2 (S^1)} \right).
\end{align*}
Then we notice that from (\ref{bound1}) it follows
\begin{align*}
||q^{N;1}[u_1^j,u_2^j]||_{C(0,T; L^2(S^1))} \leq C
\end{align*}
for every $j \in \mathbb{N}$ and because of the convergence result in Step 1 it holds
\begin{align*}
||q^{N;1}[u_1^*,u_2^*]||_{C(0,T; L^2(S^1))} \leq ||q^{N;1}[u_1^*,u_2^*] - q^{N;1}[u_1^j,u_2^j]||_{C(0,T; L^2(S^1))} +||q^{N;1}[u_1^j,u_2^j]||_{C(0,T; L^2(S^1))} \leq C.
\end{align*}
Therefore the second bracket in the representation of $I_1$ and $I_2$ is bounded, and using strong convergence of $q^{N;1}[u_1^j, u_2^j]$ in $C(0,T; L^2(S^1))$, we obtain
\begin{align*}
I_1, I_2 \rightarrow 0, \ j \rightarrow \infty.
\end{align*}
For the term $I_3$ we obtain
\begin{align*}
\liminf_{j \rightarrow \infty} I_3 = \liminf_{j \rightarrow \infty} \Bigg( &\frac{\beta_1}{2} \left( ||u_1^j||_{L^2(0,T;L^2(S^1))}^2 - ||u_1^*||_{L^2(0,T;L^2(S^1))}^2 \right) \\
+ &\frac{\beta_2}{2} \left( ||u_2^j||_{L^2(0,T;L^2(S^1))}^2 - ||u_2^*||_{L^2(0,T;L^2(S^1))}^2   \right) \Bigg) \geq 0,
\end{align*}
since weak convergence of $(u_1^j)_{j \in \mathbb{N}}$ and $(u_2^j)_{j \in \mathbb{N}}$ in $L^\infty(0,T; W^{1,q}(S^1))$ automatically implies weak convergence in $L^2(0,T; L^2(S^1))$ to the same limit, and the $L^2(0,T; L^2(S^1))$ norm is weakly lower semicontinuous. Finally, we obtain
\begin{align*}
\lim_{j \rightarrow \infty} \left( \mathcal{J}_N (q^N [u_1^j,u_2^j], u_1^j, u_2^j) - \mathcal{J}_N (q^N [u_1^*,u_2^*],u_1^*, u_2^*) \right) 
\geq \liminf_{j \rightarrow \infty} I_1 + \liminf_{j \rightarrow \infty} I_2 + \liminf_{j \rightarrow \infty} I_3 \geq 0.
\end{align*}
\end{proof}

\begin{rem}
We notice that the bound for $||q^{N;1}[u_1^j, u_2^j]||_{L^\infty(0,T; H^1(S^1))}$ in Step 1 does not depend on $N$. We will need this fact in the proof of the main result.
\end{rem}

\section{Mean Field Limit for Fixed Control Functions}

We start by noticing that the Cauchy problem to the non-local PDE (\ref{mfpde}) has a unique solution, since the non-local term $\omega[q]$ is bounded. One sees directly that the equation preserves the total mass and, moreover, we can obtain an analogous result to Step 1 in the proof of Proposition \ref{JNminimizer} for the solution of the mean field PDE (\ref{mfpde}).

\begin{lemma}
The Cauchy problem corresponding to the non-local PDE (\ref{mfpde}) and initial data $q_0 \in L^2(S^1)$ has a unique weak solution in $C(0,T;L^2(S^1))$.
\end{lemma}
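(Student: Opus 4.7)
The plan is to exploit the fact that the non-local term is uniformly bounded regardless of $q$: for any $q(t,\cdot) \in L^2(S^1)$, one has
$$|w[q](t,\theta)| = \left| \int_{S^1} \sin(\theta' - \theta - \alpha) q(t,\theta') \, d\theta' \right| \leq \|q(t,\cdot)\|_{L^1(S^1)} \leq \sqrt{2\pi}\, \|q(t,\cdot)\|_{L^2(S^1)}.$$
Thus the PDE (\ref{mfpde}) is effectively a parabolic equation with a drift coefficient $u_1 + u_2 w[q]$ that is in $L^\infty$, and the nonlinearity enters only in a mild Lipschitz way through $w$. This reduces existence to a fixed-point argument over an auxiliary linear problem.

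For existence, I would define a map $\Phi : C([0,T]; L^2(S^1)) \to C([0,T]; L^2(S^1))$ sending $\tilde q$ to the unique weak solution $q$ of the linearized equation
$$\partial_t q - D \partial_\theta^2 q + \partial_\theta\bigl( q\, (u_1 + u_2 w[\tilde q]) \bigr) = 0, \qquad q(0,\cdot)=q_0.$$
Since the drift is in $L^\infty$, classical linear parabolic theory (Theorem 4.2, Chapter III in \cite{MR241822}) gives existence, uniqueness and regularity of $q$. Testing with $q$ and with $\partial_\theta q$ (essentially the same calculation as Step~1 in the proof of Proposition~\ref{JNminimizer}) yields bounds in $L^\infty(0,T; L^2) \cap L^2(0,T; H^1)$ that depend only on $\|q_0\|_{L^2}$, $\tilde M$ and $T$. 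The equation then gives $\partial_t q \in L^2(0,T; H^{-1})$. Aubin–Lions supplies compactness of $\Phi$, and a short calculation using $|w[\tilde q_1] - w[\tilde q_2]| \leq C\|\tilde q_1 - \tilde q_2\|_{L^2(S^1)}$ shows that on a small interval $[0,T']$ the map $\Phi$ is actually a contraction in $C([0,T']; L^2(S^1))$; iterating on successive time intervals covers $[0,T]$.

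For uniqueness, let $q_1, q_2$ be two weak solutions with the same initial data, and set $v := q_1 - q_2$. Then
$$\partial_t v - D \partial_\theta^2 v + \partial_\theta\bigl( v\,(u_1 + u_2 w[q_1]) \bigr) + \partial_\theta\bigl( q_2 \, u_2 \,(w[q_1] - w[q_2]) \bigr) = 0.$$
Testing with $v$, integrating by parts, and using $|w[q_1](t,\theta) - w[q_2](t,\theta)| \leq \sqrt{2\pi}\,\|v(t,\cdot)\|_{L^2}$, boundedness of $u_1, u_2$ via (\ref{ubound}), and the $L^\infty(0,T; L^2)$ bound on $q_2$ already established in the existence step, one obtains
$$\tfrac{1}{2}\tfrac{d}{dt}\|v(t)\|_{L^2}^2 + D \|\partial_\theta v(t)\|_{L^2}^2 \leq C\,\|v(t)\|_{L^2}^2 + \tfrac{D}{2}\|\partial_\theta v(t)\|_{L^2}^2,$$
and Grönwall's inequality yields $v \equiv 0$.

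The main obstacle I expect is controlling the cross term $\int_{S^1} q_2 \, u_2 \,(w[q_1]-w[q_2])\, \partial_\theta v \, d\theta$ in the uniqueness estimate: one needs $q_2$ in $L^2$ to bound $\int |q_2||\partial_\theta v|\,d\theta$ by $\|q_2\|_{L^2}\|\partial_\theta v\|_{L^2}$ and thereby absorb the $\partial_\theta v$ into the dissipation. This is exactly why the existence step must produce a solution with $\|q(t,\cdot)\|_{L^2}$ bounded uniformly in time, and why the natural function space is $C([0,T]; L^2(S^1))$ rather than merely $C([0,T]; L^1(S^1))$. Beyond this point, all other estimates are essentially parallel to the ones already carried out in Section~2.
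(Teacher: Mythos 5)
Your proposal is correct and follows the same route the paper indicates for this lemma: a classical fixed-point (contraction) argument for the linearized problem combined with the $L^\infty(0,T;L^2(S^1))$ energy estimate. Two details to tighten: the a priori bound for $\Phi(\tilde q)$ depends on $\|\tilde q\|_{C([0,T];L^2(S^1))}$ through $w[\tilde q]$, so the contraction must be set up on an invariant ball (or one invokes mass conservation when $q_0$ is a probability density, giving $|w[q]|\leq \|q\|_{L^1}=1$); and testing with $\partial_\theta q$ is unnecessary for the stated conclusion and would require $q_0\in H^1(S^1)$.
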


The proof of this result relies on the classical fixed point argument and $L^\infty (0,T; L^2(S^1))$ estimate.

\begin{lemma} \label{qconvergence}
Assume that the initial data is probability density $q_0 \in H^1(S^1)$. For a sequence $(u_1^N, u_2^N)_{N \in \mathbb{N}} \in \mathcal{U}^2$, there exists a weakly convergent (not relabeled) subsequence $(u_1^N, u_2^N) \rightharpoonup (\overline{u}_1, \overline{u}_2)$ in $\mathcal{U}^2$ such that the corresponding solution $q[u_1^N, u_2^N]$ of (\ref{mfpde}) converges to $q[\overline{u}_1, \overline{u}_2]$ in the following sense
\begin{align*}
q[u_1^N, u_2^N] \rightarrow q[\overline{u}_1, \overline{u}_2] \ \mathrm{in} \ C(0,T; L^2(S^1)).
\end{align*}
Moreover, it holds
\begin{align*}
||q[u_1^N, u_2^N]||_{C(0,T; H^1(S^1))} + ||q[\overline{u}_1, \overline{u}_2]||_{C(0,T; H^1(S^1))} \leq C
\end{align*}
for every $N \in \mathbb{N}$ and some uniform constant $C>0$.
\end{lemma}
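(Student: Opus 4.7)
The plan is to adapt Step 1 of the proof of Proposition \ref{JNminimizer} to the mean-field PDE (\ref{mfpde}), which is structurally simpler since only one density is involved rather than a hierarchy of marginals. Since $(u_1^N, u_2^N)_{N \in \mathbb{N}}$ is bounded in $(L^\infty(0,T; W^{1,q}(S^1)))^2$, Banach--Alaoglu yields a weakly$^*$ convergent subsequence (not relabeled) with limit $(\overline{u}_1, \overline{u}_2) \in \mathcal{U}^2$. Writing $q^N := q[u_1^N, u_2^N]$, the aim is to extract uniform $H^1$ bounds, invoke Aubin--Lions to obtain strong $C(0,T; L^2(S^1))$ convergence of a subsequence, and then identify the limit as $q[\overline{u}_1, \overline{u}_2]$ by passing to the limit in the weak formulation.

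The crucial structural observation is that $q^N(t, \cdot)$ is a probability density for every $N$ and every $t \in [0,T]$, so the nonlocal term satisfies $|w[q^N]| \leq 1$ and $|\partial_\theta w[q^N]| \leq 1$ pointwise. Testing (\ref{mfpde}) with $q^N$, using $\|u_i^N\|_{L^\infty} \leq \tilde{M}$ from (\ref{ubound}), Young's inequality to absorb $\|\partial_\theta q^N\|_{L^2}^2$ into the dissipation, and Grönwall's lemma gives a uniform $L^\infty(0,T; L^2(S^1))$ bound. Next I differentiate (\ref{mfpde}) in $\theta$, test with $\partial_\theta q^N$ and integrate by parts. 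The resulting right-hand side consists of integrals of the form
\begin{align*}
\int_{S^1} \partial_\theta q^N\, u_1^N\, \partial_\theta^2 q^N \, d\theta, \quad &\int_{S^1} q^N\, \partial_\theta u_1^N\, \partial_\theta^2 q^N \, d\theta, \\
\int_{S^1} \partial_\theta q^N\, u_2^N\, w[q^N]\, \partial_\theta^2 q^N \, d\theta, \quad &\int_{S^1} q^N\, \partial_\theta u_2^N\, w[q^N]\, \partial_\theta^2 q^N \, d\theta,
\end{align*}
together with one term involving $\partial_\theta w[q^N]$. The terms containing $\partial_\theta u_i^N$ are handled by Hölder with exponents $(q, 2q/(q-2), 2)$ combined with the Gagliardo--Nirenberg interpolation $\|q^N\|_{L^{2q/(q-2)}} \leq C(1+\|\partial_\theta q^N\|_{L^2}^2)^{1/(2q)}$ already used in the proof of Proposition \ref{JNminimizer}, and the remaining terms by Cauchy--Schwarz together with the boundedness of $w[q^N]$ and $\partial_\theta w[q^N]$. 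Young's inequality with a small parameter $\varepsilon>0$ absorbs $\|\partial_\theta^2 q^N\|_{L^2}^2$ into the dissipative term, and Grönwall's inequality yields the uniform $L^\infty(0,T; H^1(S^1))$ bound asserted in the lemma. Reading off $\partial_t q^N$ from (\ref{mfpde}) then provides a uniform bound in $L^2(0,T; H^{-1}(S^1))$, so the Aubin--Lions lemma supplies a subsequence converging strongly in $C(0,T; L^2(S^1))$ to some limit $q^*$.

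To identify $q^* = q[\overline{u}_1, \overline{u}_2]$, I pass to the limit in the weak formulation tested against a smooth $\phi$. For the linear drift $\int q^N u_1^N \partial_\theta \phi$, strong $C(0,T; L^2)$ convergence of $q^N$ paired with weak $L^2(0,T; L^2)$ convergence of $u_1^N$ (which follows from the $L^\infty(0,T; W^{1,q})$ weak$^*$ convergence) suffices. For the nonlinear nonlocal term $\int q^N u_2^N w[q^N] \partial_\theta \phi$, the elementary estimate $\|w[q^N] - w[q^*]\|_{L^\infty(S^1)} \leq \|q^N - q^*\|_{L^1(S^1)}$ gives strong uniform convergence $w[q^N] \to w[q^*]$ on $[0,T] \times S^1$, which combined with strong $L^2$ convergence of $q^N$ yields strong $L^2(0,T; L^2)$ convergence of the product $q^N w[q^N]$ and permits pairing against the weakly convergent $u_2^N$. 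Uniqueness of weak solutions to (\ref{mfpde}) then identifies $q^* = q[\overline{u}_1, \overline{u}_2]$.

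The main obstacle mirrors that of Proposition \ref{JNminimizer}: weak convergence of the controls is not in itself strong enough to pass to the limit in products with $q^N$ or $q^N w[q^N]$, so the entire argument hinges on upgrading to strong convergence of the density. The upgrade is available here precisely because parabolic regularization from the $D \partial_\theta^2$ term, together with mass conservation that keeps $w[q^N]$ and $\partial_\theta w[q^N]$ uniformly bounded, allows the $H^1$ energy estimate to be closed against the $W^{1,q}$ norm of the controls.
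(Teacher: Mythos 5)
Your proposal is correct and follows essentially the same route as the paper, which simply states that the lemma ``follows easily by adjusting the arguments of the proof of Proposition \ref{JNminimizer}''; your adaptation (energy estimates for $q$ and $\partial_\theta q$ using the uniform bound $|w[q^N]|\leq 1$ from mass conservation, Aubin--Lions, and identification of the limit in the weak formulation) is exactly that adjustment, carried out in more detail than the paper itself provides.
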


The proof of this lemma follows easily by adjusting the arguments of the proof of Proposition \ref{JNminimizer}. \\

\begin{proposition} \label{l1convergence}
Let $q^N$ be the solution of (\ref{PDEqN}) and $q^{N;1}$ be its first marginal. Let \(q\) be the solution of (\ref{mfpde}), then we have
\begin{align*}
||q^{N;1} - q||_{L^\infty (0,T; L^1(S^1))} \leq \frac{C}{\sqrt{N}}
\end{align*}
for sufficiently big $N \in \mathbb{N}$.
\end{proposition}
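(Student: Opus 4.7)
The plan is to prove this convergence rate via the relative entropy method followed by sub-additivity and Csisz\'ar--Kullback--Pinsker. Define the $N$-particle relative entropy
$$H_N(t) := \int_{(S^1)^N} q^N(t,\theta_1,\dots,\theta_N) \log \frac{q^N(t,\theta_1,\dots,\theta_N)}{\prod_{i=1}^N q(t,\theta_i)} \, d\theta_1 \dots d\theta_N.$$
The goal is to establish a uniform-in-$N$ bound $H_N(t) \leq C$ on $[0,T]$. Since $q^N$ is symmetric under permutations of the $\theta_i$ (as the initial data $q_0^{\otimes N}$ and the drift/diffusion in (\ref{PDEqN}) are symmetric), the standard sub-additivity property of relative entropy yields $H(q^{N;1}(t)\,|\,q(t)) \leq H_N(t)/N \leq C/N$, and the Csisz\'ar--Kullback--Pinsker inequality then gives
$$\|q^{N;1}(t) - q(t)\|_{L^1(S^1)}^2 \leq 2\,H(q^{N;1}(t)\,|\,q(t)) \leq \frac{2C}{N},$$
uniformly in $t$, which is the claimed rate.

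To produce the uniform bound on $H_N(t)$, I would differentiate in time using (\ref{PDEqN}) for the numerator and the tensorized version of (\ref{mfpde}) (applied to each coordinate $\theta_i$) for the denominator. After integration by parts on $(S^1)^N$, the noise terms combine into a nonnegative Fisher-information dissipation $-D \int q^N |\nabla_{\theta_1,\dots,\theta_N} \log(q^N/q^{\otimes N})|^2\, d\theta$, the $u_1$-drifts cancel because they are local and identical in both equations, and there remains an interaction error term
$$R_N(t) := \int_{(S^1)^N} q^N \sum_{i=1}^N u_2(t,\theta_i)\!\left( \frac{1}{N}\sum_{j=1}^N \sin(\theta_j - \theta_i - \alpha) - \int_{S^1} \sin(\tilde\theta - \theta_i - \alpha)\, q(t,\tilde\theta)\, d\tilde\theta \right)\!\partial_{\theta_i}\log\frac{q^N}{q^{\otimes N}}\, d\theta.$$
A Young-type splitting absorbs one factor of the logarithmic gradient into the dissipation, leaving us to control $\frac{1}{D}\int q^N \sum_i |\Phi_i|^2\, d\theta$, where each $\Phi_i$ is the $u_2$-weighted fluctuation of the empirical average of the smooth kernel $\sin(\cdot - \theta_i - \alpha)$ around its mean-field expectation under $q$.

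At this point I would invoke the large deviation estimate of \cite{MR3858403}: for tensorized test functions on $(S^1)^N$ satisfying appropriate cancellation against $q$, one obtains an exponential moment bound under $q^{\otimes N}$ of the form $\int q^{\otimes N} \exp(\lambda \sum_{i,j} \phi(\theta_i,\theta_j))\, d\theta \leq C$ for $\lambda$ sufficiently small. Using a change of measure (the variational formula $\int f\, d q^N \leq H_N + \log \int e^f\, d q^{\otimes N}$), this translates the control of $R_N$ into a bound of the form $\frac{d}{dt} H_N(t) \leq C_1 + C_2 H_N(t)$, where the constants depend on $\tilde M$ from (\ref{ubound}) and on the regularity of $q$ supplied by Lemma \ref{qconvergence}. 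Gr\"onwall's inequality combined with $H_N(0)=0$ then yields $H_N(t) \leq C$ uniformly in $N$ and $t\in[0,T]$.

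The main obstacle is verifying the centering/cancellation hypothesis of the large deviation estimate: the raw kernel $\sin(\theta_j - \theta_i - \alpha)$ is only centered in its first argument, so one must recenter it to the fully cancelled form
$$\phi(\theta_i,\theta_j) := \sin(\theta_j - \theta_i - \alpha) - \int \sin(\tilde\theta - \theta_i - \alpha)\, q(\tilde\theta)\, d\tilde\theta - \int \sin(\theta_j - \hat\theta - \alpha)\, q(\hat\theta)\, d\hat\theta + \iint \sin(\tilde\theta - \hat\theta - \alpha)\, q(\hat\theta)q(\tilde\theta)\, d\hat\theta d\tilde\theta,$$
and then control the resulting "lower order" remainder pieces (linear in the empirical measure minus $q$) using $\|u_2\|_\infty \leq \tilde M$ together with a direct Cauchy--Schwarz estimate exploiting the boundedness of $\sin$. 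Once this decomposition is in place the rest is bookkeeping in the Gr\"onwall step.
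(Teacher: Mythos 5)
Your proposal follows essentially the same route as the paper: relative entropy between $q^N$ and the tensorized law $q^{\otimes N}$, cancellation of the local $u_1$-drift, a Fisher-information dissipation term that absorbs half of the interaction error via Young's inequality, the change-of-measure (Gibbs variational) inequality of Lemma \ref{jabinzhenfulemma} to pass from expectations under $q^N$ to exponential moments under $q^{\otimes N}$, the Jabin--Wang law of large numbers at exponential scale, Gr\"onwall with vanishing initial entropy, and finally sub-additivity plus Csisz\'ar--Kullback--Pinsker; your unnormalized $H_N$ is just $N$ times the paper's $\mathcal{H}$, so the bookkeeping is equivalent. The one point where you diverge is the ``main obstacle'' you flag at the end, and it is not actually an obstacle. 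The fluctuation in the error term is $\frac{1}{N}\sum_j\psi(\theta_i,\theta_j)$ with $\psi(\theta_i,\theta_j)=\frac{1}{4e}\bigl(\sin(\theta_j-\theta_i-\alpha)-\int_{S^1}\sin(\theta'-\theta_i-\alpha)\,q(\theta')\,d\theta'\bigr)$, which is centered in the \emph{summed} (second) variable: $\int_{S^1}\psi(z,x)\,q(x)\,dx=0$ for every fixed $z$. That is exactly the hypothesis of Proposition \ref{jabinzhenfutheorem} (Theorem 3 in \cite{MR3858403}), which controls $\int q^{\otimes N}\exp\bigl(\frac{1}{N}\sum_{j_1,j_2}\psi(\theta_i,\theta_{j_1})\psi(\theta_i,\theta_{j_2})\bigr)$ with the first argument frozen at one coordinate --- precisely the structure of $e^{N\Phi_i}$ here, applied for each $i$ separately and then averaged over $i$ after taking logarithms. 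The doubly-centered kernel you construct is what the genuinely two-index functional $\frac{1}{N}\sum_{i,j}\phi(\theta_i,\theta_j)$ would require (Theorem 4 of \cite{MR3858403}); invoking it here only manufactures additional remainder terms that you would then have to estimate separately. Dropping that detour, your argument closes and coincides with the paper's; note also that the normalization $\frac{1}{4e}$ is exactly what makes $\|\psi\|_{L^\infty}\le\frac{1}{2e}$, as the exponential moment bound demands.
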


The proof of this result follows the idea of relative entropy from \cite{MR3858403}. This method revolves around the control of the rescaled relative entropy
\begin{align*}
\mathcal{H} (q^N | q^{\otimes N}) (t) = \frac{1}{N} \int_{(S^1)^N} q^N (t, \theta_1, \dots, \theta_N) \log \frac{q^N (t, \theta_1, \dots, \theta_N)}{q^{\otimes N}(t, \theta_1, \dots, \theta_N)} d\theta_1 \dots d\theta_N,
\end{align*}
where $q^{\otimes N}$ is the tensorized law
\begin{align*}
q^{\otimes N} = \prod_{i=1}^N q(t,\theta_i)
\end{align*}
for the solution $q$ of (\ref{mfpde}). 
We derive the equation for $q^{\otimes N}$ and compute $\frac{d}{dt} \mathcal{H} (q^N | q^{\otimes N})$ to obtain the estimate for
\begin{align*}
\frac{C}{N} \int_{(S^1)^N}   \sum_{i=1}^N\left( \frac{u_2(\theta_i)}{N} \sum_{j=1}^N \sin(\theta_j - \theta_i - \alpha) - u_2(\theta_i) \int_{S^1} \sin(\theta' - \theta_i - \alpha) q(\theta') \, d\theta' \right)^2 q^N \, d\theta_1 \dots d\theta_N.
\end{align*}
Since we do not know the properties of $q^N$, we would much prefer having expectations with respect to the tensorized law $q^{\otimes N}$. Lemma \ref{jabinzhenfulemma} allows us this transition and we further bound the new term through a law of large number at the exponential scale given by Proposition \ref{jabinzhenfutheorem}. Finally, applying Grönwall's inequality and Csiszár-Kullback-Pinsker inequality finishes the proof. \\

\begin{proof}
In this proof we omit the dependence on the control functions and write $q^N[u_1,u_1] = q^N$ and $q[u_1,u_2] = q$. For reader's convenience, important results from \cite{MR3858403} used in the proof can be found in the appendix. \\

Direct computation shows that $q^{\otimes N}$ satisfies the equation
\begin{align*}
\partial_t q^{\otimes N} - \sum_{i=1}^N D \partial_{\theta_i}^2 q^{\otimes N} + \sum_{i=1}^N \partial_{\theta_i} \left( q^{\otimes N} \left( u_1(t,\theta_i) + u_2(t,\theta_i) \int_{S^1} \sin (\theta' - \theta_i - \alpha) q(\theta') \, d\theta' \right) \right) = 0.
\end{align*}
Using this and the equation for $q^N$, integrating by parts and using Hölder's and Young's inequalities, we get
\begin{align*}
\begin{split}
&\frac{d}{dt} \mathcal{H} (q^N | q^{\otimes N}) \\
& = \frac{1}{N} \int_{(S^1)^N} \left( \partial_t q^N -  \partial_t q^{\otimes N} \frac{q^N}{q^{\otimes N}} + \log \left( \frac{q^N}{q^{\otimes N}} \right) \partial_t q^N \right) \, d\theta_1 \dots d\theta_N \\
& = \frac{1}{N} \int_{(S^1)^N} \sum_{i=1}^N \left( D \partial_{\theta_i} q^{\otimes N}  - q^{\otimes N} \left( u_1(\theta_i)  + u_2(\theta_i) \int_{S^1} \sin(\theta' - \theta_i - \alpha) q(\theta') \, d\theta' \right) \right) \partial_{\theta_i} \frac{q^N}{q^{\otimes N}}  \, d\theta_1 \dots d\theta_N \\
& - \frac{1}{N} \int_{(S^1)^N} \sum_{i=1}^N \left( D \partial_{\theta_i} q^N  - q^N \left( u_1(\theta_i)  + \frac{u_2(\theta_i)}{N} \sum_{j=1}^N \sin(\theta_j - \theta_i - \alpha) \right) \right) \partial_{\theta_i} \log \left( \frac{q^N}{q^{\otimes N}} \right)  \, d\theta_1 \dots d\theta_N \\
& \leq -\frac{D}{N} \int_{(S^1)^N} \sum_{i=1}^N \left\vert \partial_{\theta_i} \log \left( \frac{q^N}{q^{\otimes N}} \right) \right\vert^2 q^N \, d\theta_1 \dots d\theta_N + \frac{D}{2N} \sum_{i=1}^N\int_{(S^1)^N} q^N \left\vert \partial_{\theta_i} \log \left( \frac{q^N}{q^{\otimes N}} \right) \right\vert^2  \, d\theta_1 \dots d\theta_N  \\
& +\frac{1}{2DN} \sum_{i=1}^N\int_{(S^1)^N}  q^N \left( \frac{u_2(\theta_i)}{N} \sum_{j=1}^N \sin(\theta_j - \theta_i - \alpha) - u_2(\theta_i) \int_{S^1} \sin(\theta' - \theta_i - \alpha) q(\theta') \, d\theta' \right)^2 \, d\theta_1 \dots d\theta_N  \\
& \leq -\frac{D}{2N} \int_{(S^1)^N}  \sum_{i=1}^N \left\vert \partial_{\theta_i} \log \left( \frac{q^N}{q^{\otimes N}} \right) \right\vert^2  q^N \, d\theta_1 \dots d\theta_N \\
& + \frac{1}{2DN} \int_{(S^1)^N}   \sum_{i=1}^N\left( \frac{u_2(\theta_i)}{N} \sum_{j=1}^N \sin(\theta_j - \theta_i - \alpha) - u_2(\theta_i) \int_{S^1} \sin(\theta' - \theta_i - \alpha) q(\theta') \, d\theta' \right)^2 q^N \, d\theta_1 \dots d\theta_N.
\end{split}
\end{align*}
We now analyze the second term that we rewrite in the following way
\begin{align*}
&\frac{1}{2DN} \int_{(S^1)^N}   \sum_{i=1}^N\left( \frac{u_2(\theta_i)}{N} \sum_{j=1}^N \sin(\theta_j - \theta_i - \alpha) - u_2(\theta_i) \int_{S^1} \sin(\theta' - \theta_i - \alpha) q(\theta') \, d\theta' \right)^2 q^N \, d\theta_1 \dots d\theta_N \\
\leq & \frac{8e^2 \tilde{M}^2}{D N} \int_{(S^1)^N}   \sum_{i=1}^N \frac{1}{16e^2} \left( \frac{1}{N} \sum_{j=1}^N \sin(\theta_j - \theta_i - \alpha) -  \int_{S^1} \sin(\theta' - \theta_i - \alpha) q(\theta') \, d\theta' \right)^2 q^N \, d\theta_1 \dots d\theta_N.
\end{align*}
We use the strategy from \cite{MR3858403} and apply Lemma \ref{jabinzhenfulemma} to the function
\begin{align*}
\Phi_i = \frac{1}{16e^2} \left( \frac{1}{N} \sum_{j=1}^N \sin(\theta_j - \theta_i - \alpha) - \int_{S^1} \sin(\theta' - \theta_i - \alpha) q(\theta') \, d\theta' \right)^2
\end{align*}
and obtain
\begin{align*}
\frac{1}{N} \sum_{i=1}^N \int_{(S^1)^N} \Phi_i q^N \, d\theta_1 \dots d\theta_N \leq \mathcal{H} (q^N\vert q^{\otimes N}) + \frac{1}{N^2} \sum_{i=1}^N \log \int_{(S^1)^N} q^{\otimes N} e^{N\Phi_i} \, d\theta_1 \dots d\theta_N.
\end{align*}
We define
\begin{align*}
\psi (\theta_i, \theta_j) = \frac{1}{4e} \sin (\theta_j - \theta_i - \alpha) -  \frac{1}{4e} \int_{S^1} \sin (\theta ' - \theta_i - \alpha) q(\theta ') \,d\theta'
\end{align*}
and hence we obtain
\begin{align*}
\exp(N\Phi_i) = \exp \left( \frac{1}{\sqrt{N}} \sum_{j=1}^N  \psi (\theta_i, \theta_j) \right)^2 = \exp \left( \frac{1}{N} \sum_{j_1,j_2=1}^N \psi (\theta_i, \theta_{j_1}) \psi (\theta_i, \theta_{j_2}) \right).
\end{align*}
We remark that $\psi$ has vanishing expectation with respect to $q$ and it holds
\begin{align*}
||\psi||_{L^\infty} \leq \frac{1}{2e}.
\end{align*}
So we apply Proposition \ref{jabinzhenfutheorem} and obtain
\begin{align*}
\int_{(S^1)^N} q^{\otimes N} \exp \left( \frac{1}{N} \sum_{j_1,j_2=1}^N \psi (\theta_i, \theta_{j_1}) \psi (\theta_i, \theta_{j_2}) \right) \, d\theta_1 \dots d\theta_N \leq C.
\end{align*}
Therefore
\begin{align*}
\frac{1}{N^2} \sum_{i=1}^N \log \int_{(S^1)^N} q^{\otimes N} e^{N\Phi_i} \, d\theta_1 \dots d\theta_N \leq \frac{C}{N}
\end{align*}
and finally
\begin{align*}
&\frac{d}{dt} \mathcal{H} (q^N | q^{\otimes N}) + \frac{D}{2N} \int_{(S^1)^N}  \sum_{i=1}^N \left\vert \partial_{\theta_i} \log \left( \frac{q^N}{q^{\otimes N}} \right) \right\vert^2  q^N \, d\theta_1 \dots d\theta_N  \leq \frac{8 e^2 \tilde{M}^2}{D}\mathcal{H} (q^N\vert q^{\otimes N}) + \frac{8e^2 \tilde{M}^2 C}{DN}.
\end{align*}
Applying Grönwall's inequality we obtain
\begin{align*}
\mathcal{H} (q^N| q^{\otimes N})(t) \leq \frac{8e^2 \tilde{M}^2 CT}{DN} \exp \left( \frac{8e^2 \tilde{M}^2 T}{D} \right), \ \forall t \in [0,T].
\end{align*}
Finally using Csiszár-Kullback-Pinsker inequality (see \cite{MR2459454}), we conclude
\begin{align*}
||q^{N;1} - q ||^2_{L^\infty (0,T; L^1(S^1))} \leq 2 ||\mathcal{H} (q^N| q^{\otimes N})(t)||_{L^\infty (0,T)} \leq \frac{C}{N}.
\end{align*}
This proves the claim.
\end{proof}

\section{Proof of the Main Result}

This section is devoted to the proof of the main result, Theorem \ref{mainresult}. We start by proving the following convergence result for the cost functionals $\mathcal{J}_N$ and $\mathcal{J}$.

\begin{lemma} \label{minlimit}
Assume that the initial data is probability density $q_0 \in H^1(S^1)$ and assume $(\hat{u}^N_1,\hat{u}^N_2)_{N \in \mathbb{N}}$ is a sequence in $\mathcal{U}^2$.
Then it holds
\begin{align*}
\lim_{N \rightarrow \infty} \left( \mathcal{J}_N (q^N [\hat{u}^N_1,\hat{u}^N_2],\hat{u}^N_1,\hat{u}^N_2) - \mathcal{J} (q [\hat{u}^N_1, \hat{u}^N_2], \hat{u}^N_1, \hat{u}^N_2) \right) = 0.
\end{align*}
\end{lemma}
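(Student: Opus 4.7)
Since the control penalty $\mathcal{J}_N^u$ coincides with its mean-field counterpart, the difference of the two functionals reduces to the tracking and terminal contributions. Using the identity $a^2-b^2=(a-b)(a+b)$, I would rewrite
\begin{align*}
&\mathcal{J}_N(q^N[\hat{u}_1^N,\hat{u}_2^N],\hat{u}_1^N,\hat{u}_2^N) - \mathcal{J}(q[\hat{u}_1^N,\hat{u}_2^N],\hat{u}_1^N,\hat{u}_2^N) \\
&= \frac{\alpha_r}{2}\int_0^T\int_{S^1} (q^{N;1}-q)(q^{N;1}+q-2z)\,d\theta\,dt \\
&\quad + \frac{\alpha_t}{2}\int_{S^1}(q^{N;1}(T)-q(T))(q^{N;1}(T)+q(T)-2z(T))\,d\theta,
\end{align*}
where I abbreviate $q^{N;1}:=q^{N;1}[\hat{u}_1^N,\hat{u}_2^N]$ and $q:=q[\hat{u}_1^N,\hat{u}_2^N]$, and estimate each integral by H\"older's inequality pairing $L^1_\theta$ against $L^\infty_\theta$.

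For the $L^\infty$ factor, the remark following Proposition \ref{JNminimizer} records a bound on $||q^{N;1}[u_1,u_2]||_{L^\infty(0,T;H^1(S^1))}$ that is independent of $N$, and Lemma \ref{qconvergence} yields the analogous bound for $q[u_1,u_2]$. Inspecting the energy arguments that establish these facts, all constants depend only on $\tilde{M}$, $D$, $T$ and $||q_0||_{H^1}$, and in particular are uniform over $(u_1,u_2) \in \mathcal{U}^2$. Composing with the one-dimensional Sobolev embedding $H^1(S^1) \hookrightarrow L^\infty(S^1)$ and with the assumption $z \in L^\infty(0,T;L^\infty(S^1))$, I obtain a bound $||q^{N;1}+q-2z||_{L^\infty(0,T;L^\infty(S^1))} \leq K$ for some constant $K$ that does not depend on $N$ or on the particular sequence $(\hat{u}_1^N,\hat{u}_2^N)$.

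The $L^1$ factor is then controlled by Proposition \ref{l1convergence} applied with the control $(\hat{u}_1^N,\hat{u}_2^N)$, which yields $||q^{N;1}-q||_{L^\infty(0,T;L^1(S^1))} \leq C/\sqrt{N}$. The crucial point I would emphasize is that the constant $C$ is uniform across $\mathcal{U}^2$: the relative-entropy derivation only interacts with the controls through the uniform $L^\infty$ bound $\tilde{M}$, and the auxiliary function $\psi$ used in the large-deviation input Proposition \ref{jabinzhenfutheorem} satisfies $||\psi||_{L^\infty}\leq 1/(2e)$ irrespective of $q$. Combining the two estimates bounds both terms in the factorised difference by $O(N^{-1/2})$, and letting $N\to\infty$ completes the proof. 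The only nontrivial step is this uniform-in-control book-keeping verifying that the constants from Proposition \ref{l1convergence} and from the $H^1$ estimates do not deteriorate as one varies $(\hat{u}_1^N,\hat{u}_2^N)$ within $\mathcal{U}^2$; once this is checked, the remainder is a routine application of H\"older's inequality.
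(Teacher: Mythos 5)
Your proposal is correct and follows essentially the same route as the paper: the control penalties cancel, the remaining quadratic terms are factorised and estimated by pairing the $L^\infty(0,T;L^1)$ bound from Proposition \ref{l1convergence} against the $N$-uniform $L^\infty$ bounds coming from the $H^1$ estimates and the Sobolev embedding. Your explicit remark that the constants in Proposition \ref{l1convergence} and in the $H^1$ estimates must be uniform over $\mathcal{U}^2$ (since the controls vary with $N$) is a point the paper leaves implicit, and it is the right thing to check.
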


\begin{proof}
According to the definitions of $\mathcal{J}_N$ and $\mathcal{J}$, we have
\begin{align*}
&\left\vert \mathcal{J}_N (q^N [\hat{u}^N_1,\hat{u}^N_2],\hat{u}^N_1,\hat{u}^N_2) - \mathcal{J} (q [\hat{u}^N_1, \hat{u}^N_2], \hat{u}^N_1, \hat{u}^N_2) \right\vert \\ 
\leq &\frac{\alpha_r}{2} \int_0^T \int_{S^1} \vert (q^{N;1}[\hat{u}^N_1,\hat{u}^N_2](t,\theta) - z(t,\theta))^2 - (q[\hat{u}^N_1,\hat{u}^N_2](t,\theta) - z(t,\theta))^2 \vert \, d\theta dt \\
&+ \frac{\alpha_t}{2} \int_{S^1} \vert (q^{N;1}[\hat{u}^N_1,\hat{u}^N_2](T,\theta) - z(T,\theta))^2 - (q[\hat{u}^N_1,\hat{u}^N_2](T,\theta) - z(T,\theta))^2 \vert \, d\theta  \\
=: & J_1 + J_2.
\end{align*}
According to the proofs of Proposition \ref{JNminimizer} and Lemma \ref{qconvergence}, we can bound 
\begin{align} \label{b2}
\begin{split}
&||q^{N;1} [\hat{u}_1^N, \hat{u}_2^N] ||_{L^\infty (0,T; H^1(S^1))} \leq C, \\
& ||q [\hat{u}_1^N, \hat{u}_2^N] ||_{L^\infty (0,T; H^1(S^1))} \leq C,
\end{split}
\end{align}
where the constant $C$ does not depend on $N$. Since it holds 
\begin{align*}
J_1 \leq & \frac{\alpha_r}{2} ||q^{N;1}[\hat{u}^N_1,\hat{u}^N_2]- q[\hat{u}^N_1,\hat{u}^N_2]||_{L^\infty (0,T; L^1 (S^1))} ||q^{N;1}[\hat{u}^N_1,\hat{u}^N_2] + q[\hat{u}^N_1,\hat{u}^N_2] - 2z||_{L^1 (0,T; L^\infty (S^1))}, \\
J_2 \leq & \frac{\alpha_t}{2} ||q^{N;1}[\hat{u}^N_1,\hat{u}^N_2]- q[\hat{u}^N_1,\hat{u}^N_2]||_{L^\infty (0,T; L^1 (S^1))} ||q^{N;1}[\hat{u}^N_1,\hat{u}^N_2] + q[\hat{u}^N_1,\hat{u}^N_2] - 2z||_{L^\infty (0,T; L^\infty (S^1))},
\end{align*}
we use (\ref{b2}) and Proposition \ref{l1convergence} and obtain
\begin{align*}
J_1\rightarrow 0 \ \mathrm{and} \ J_2 \rightarrow 0 \ \mathrm{for} \  N \rightarrow 0.
\end{align*}
\end{proof}

We are now ready to finish the proof of Theorem \ref{mainresult}. \\

\textit{Proof of Theorem \ref{mainresult}.}
For any fixed $N$, we know there exist minimizers of $\mathcal{J}_N (q^N [u_1,u_2],u_1,u_2)$ by Proposition \ref{JNminimizer}. We denote these minimizers by $u_1^N, u_2^N$, i.e.
\begin{align*}
\mathcal{J}_N (q^N [u_1^N,u_2^N],u_1^N,u_2^N) = \min_{u_1,u_2 \in \mathcal{U}} \mathcal{J}_N (q^N [u_1,u_2],u_1,u_2).
\end{align*}
According to Lemma \ref{qconvergence}, there exists a weak limit of a subsequence $(u_1^{N_k},u_2^{N_k})$ of $(u_1^N,u_2^N)$ in $\mathcal{U}^2$, which we denote by $(\overline{u}_1,\overline{u}_2)$, such that
\begin{align} \label{convq}
q[u_1^{N_k}, u_2^{N_k}] \rightarrow q[\overline{u}_1, \overline{u}_2] \ \mathrm{in} \ C(0,T; L^2(S^1)).
\end{align} 
We start by establishing
\begin{align} \label{lsc}
\liminf_{k \rightarrow \infty} \mathcal{J}_{N_k} (q^{N_k} [u_1^{N_k},u_2^{N_k}],u_1^{N_k},u_2^{N_k}) \geq \mathcal{J} (q[\overline{u}_1,\overline{u}_2],\overline{u}_1,\overline{u}_2).
\end{align}
We observe that it is enough to prove
\begin{align*}
\liminf_{k \rightarrow \infty} \left( \mathcal{J}_{N_k} (q^{N_k} [u_1^{N_k},u_2^{N_k} ],u_1^{N_k},u_2^{N_k}) - \mathcal{J} (q[u_1^{N_k}, u_2^{N_k}],u_1^{N_k}, u_2^{N_k}) \right) \\
+ \liminf_{k \rightarrow \infty}\mathcal{J} (q[u_1^{N_k}, u_2^{N_k}],u_1^{N_k}, u_2^{N_k})\geq \mathcal{J} (q[\overline{u}_1, \overline{u}_2],\overline{u}_1, \overline{u}_2).
\end{align*}
We first apply Lemma \ref{minlimit} and see that the first term on the left-hand side vanishes. Then
\begin{align*}
&\mathcal{J} (q [u_1^{N_k},u_2^{N_k}], u_1^{N_k}, u_2^{N_k}) - \mathcal{J}(q[\overline{u}_1,\overline{u}_2],\overline{u}_1, \overline{u}_2) \\
= &\frac{\alpha_r}{2} \int_0^T \int_{S^1} \left( q[u_1^{N_k},u_2^{N_k}] (t,\theta) - q [\overline{u}_1, \overline{u}_2](t,\theta) \right) \left( q[u_1^{N_k},u_2^{N_k}] (t,\theta) + q [\overline{u}_1, \overline{u}_2](t,\theta) - 2z(t,\theta) \right) \, d\theta dt \\ 
+ &\frac{\alpha_t}{2} \int_{S^1} \left( q [u_1^{N_k},u_2^{N_k}](T,\theta) - q[\overline{u}_1,\overline{u}_2] (T,\theta) \right) \left( q [u_1^{N_k},u_2^{N_k}](T,\theta) + q[\overline{u}_1,\overline{u}_2] (T,\theta) - 2z(T,\theta) \right) \, d\theta \\ 
+ &\frac{1}{2} \int_0^T \int_{S^1} \left[ \beta_1 \left( \left( u_1^{N_k} (t,\theta)\right)^2 - \left( \overline{u}_1 (t,\theta) \right)^2 \right)  + \beta_2 \left( \left( u_2^{N_k}(t,\theta) \right)^2 - \left( \overline{u}_2(t,\theta) \right)^2 \right) \right] \, d\theta dt \\
=: & K_1 + K_2 + K_3.
\end{align*}
We observe
\begin{align*}
K_1 \leq &\frac{\alpha_r}{2} ||q[u_1^{N_k}, u_2^{N_k}] - q[\overline{u}_1, \overline{u}_2]||_{L^2(0,T, L^2 (S^1))} \\
&\left(||q[u_1^{N_k}, u_2^{N_k}] - z||_{L^2(0,T, L^2 (S^1))} + ||q[\overline{u}_1, \overline{u}_2] - z||_{L^2(0,T, L^2 (S^1))} \right) \\
K_2  \leq &\frac{\alpha_t}{2} ||q[u_1^{N_k}, u_2^{N_k}](T) - q[\overline{u}_1,\overline{u}_2](T)||_{L^2 (S^1)}  \\
&\left( ||q[u_1^{N_k}, u_2^{N_k}](T) - z(T)||_{L^2 (S^1)} + ||q[\overline{u}_1,\overline{u}_2](T) - z(T)||_{L^2 (S^1)} \right).
\end{align*}
Using (\ref{convq}) and Lemma \ref{qconvergence}, it follows directly
\begin{align*}
K_1, K_2 \rightarrow 0, \ k \rightarrow \infty.
\end{align*}
For the term $K_3$ we obtain
\begin{align*}
\liminf_{k \rightarrow \infty} K_3 = &\liminf_{k \rightarrow \infty} \Bigg( \frac{\beta_1}{2} \left( ||u_1^{N_k}||_{L^2(0,T;L^2(S^1))}^2 - ||\overline{u}_1||_{L^2(0,T;L^2(S^1))}^2 \right) \\
+ &\frac{\beta_2}{2} \left( ||u_2^{N_k}||_{L^2(0,T;L^2(S^1))}^2 - ||\overline{u}_2||_{L^2(0,T;L^2(S^1))}^2  \right) \Bigg) \geq 0
\end{align*}
by the lower semicontinuity of the $L^2(0,T; L^2(S^1))$ norm. This proves (\ref{lsc}). \\

Now we notice that for any $(\hat{u}_1, \hat{u}_2)\in \mathcal{U}$, using Lemma \ref{minlimit}, we can obtain
\begin{align*}
\mathcal{J}(q[\hat{u}_1, \hat{u}_2],\hat{u}_1, \hat{u}_2) =& \lim_{N \rightarrow \infty} \mathcal{J}_N (q^N [\hat{u}_1, \hat{u}_2],\hat{u}_1, \hat{u}_2) =  \lim_{k \rightarrow \infty} \mathcal{J}_{N_k} (q^{N_k} [\hat{u}_1, \hat{u}_2],\hat{u}_1, \hat{u}_2) \\
\geq & \liminf_{k \rightarrow \infty} \min_{u_1, u_2 \in \mathcal{U}} \mathcal{J}_{N_k} (q^{N_k} [u_1,u_2],u_1,u_2) \geq  \mathcal{J} (q[\overline{u}_1, \overline{u}_2], \overline{u}_1, \overline{u}_2).
\end{align*}
This shows
\begin{align*}
\mathcal{J}(q[\overline{u}_1, \overline{u}_2], \overline{u}_1, \overline{u}_2) = \min_{u_1, u_2 \in \mathcal{U}} \mathcal{J} (q[u_1,u_2],u_1,u_2).
\end{align*}
\qed

\appendix

\section{Appendix}

\begin{lemma}[Lemma 1 in \cite{MR3858403}] \label{jabinzhenfulemma}
For any two probability densities $\rho_N$ and $\overline{\rho}_N$ on $(S^1)^N$, and any $\Phi \in L^\infty ((S^1)^N)$, one has that 
\begin{align*}
\int_{(S^1)^N} \Phi \rho_N \, dX^N \leq \mathcal{H} (\rho_N | \overline{\rho}_N) + \frac{1}{N} \log \int_{(S^1)^N} \overline{\rho}_N e^{N\Phi} \, dX^N,
\end{align*}
where $X^N = (x_1, \dots, x_N) \in (S^1)^N$.
\end{lemma}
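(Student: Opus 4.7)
The plan is to prove the inequality by the standard Gibbs variational (Donsker--Varadhan) argument: tilt the reference density $\overline{\rho}_N$ by $e^{N\Phi}$, normalize it to a probability density, and then compare $\rho_N$ against this tilted measure using non-negativity of the relative entropy. Since $\Phi \in L^\infty((S^1)^N)$, the partition function $Z := \int_{(S^1)^N} \overline{\rho}_N e^{N\Phi}\, dX^N$ is a finite positive constant, so the tilted density $Q := \overline{\rho}_N e^{N\Phi}/Z$ is well-defined as a probability density on $(S^1)^N$.

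The main step is a direct algebraic identity. Using the definition of $\mathcal{H}$ given in the paper, I would compute
\begin{align*}
N\mathcal{H}(\rho_N \,|\, Q)
&= \int_{(S^1)^N} \rho_N \log\frac{\rho_N}{Q}\, dX^N \\
&= \int_{(S^1)^N} \rho_N \log\frac{\rho_N}{\overline{\rho}_N}\, dX^N - N\int_{(S^1)^N} \Phi\, \rho_N\, dX^N + \log Z,
\end{align*}
which rearranges to the clean identity
\begin{align*}
\int_{(S^1)^N} \Phi\, \rho_N\, dX^N \;=\; \mathcal{H}(\rho_N \,|\, \overline{\rho}_N) - \mathcal{H}(\rho_N \,|\, Q) + \frac{1}{N}\log Z.
\end{align*}

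The conclusion then follows immediately by invoking $\mathcal{H}(\rho_N \,|\, Q) \geq 0$, which itself is a one-line consequence of Jensen's inequality applied to the convex function $x\mapsto x\log x$ (or equivalently, of the elementary bound $x\log x \geq x - 1$) applied to $\rho_N/Q$ against the probability measure $Q\, dX^N$. Dropping the non-positive term $-\mathcal{H}(\rho_N \,|\, Q)$ yields exactly the stated inequality.

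There is essentially no obstacle; the only subtlety to mention is the handling of the set $\{\overline{\rho}_N = 0\}$. If $\rho_N$ is not absolutely continuous with respect to $\overline{\rho}_N$, then $\mathcal{H}(\rho_N\,|\,\overline{\rho}_N) = +\infty$ by convention and the inequality holds trivially; otherwise $\overline{\rho}_N = 0$ forces $\rho_N = 0$ a.e., and since $e^{N\Phi} > 0$ everywhere the same set is null for $Q$, so all the logarithmic integrands are well-defined almost everywhere and the computation above is rigorous. No further machinery (convex duality, Riesz representation, etc.) is needed beyond the elementary Jensen step.
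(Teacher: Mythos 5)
Your argument is correct and complete: the tilted density $Q=\overline{\rho}_N e^{N\Phi}/Z$ is well defined because $\Phi\in L^\infty$, the algebraic identity uses only $\int\rho_N\,dX^N=1$, and dropping $-\mathcal{H}(\rho_N\,|\,Q)\le 0$ gives exactly the claim. Note that the paper itself offers no proof of this lemma --- it is quoted verbatim from Lemma 1 of the cited reference \cite{MR3858403} --- and your Gibbs/Donsker--Varadhan tilting argument is precisely the standard proof given there (equivalently phrased via the Fenchel--Young inequality $xy\le x\log x - x + e^y$ applied to $x=\rho_N/\overline{\rho}_N$ and $y=N\Phi-\log Z$), so there is nothing to add beyond your careful remark on the set $\{\overline{\rho}_N=0\}$.
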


\begin{proposition}[Theorem 3 in \cite{MR3858403}] \label{jabinzhenfutheorem}
Consider any $\overline{\rho} \in L^1 (S^1)$ with $\overline{\rho}\geq 0$ and $\int_{S^1} \overline{\rho}(x)\, dx =1$. Assume that a scalar function $\psi \in L^\infty$ with $||\psi||_{L^\infty} < \frac{1}{2e}$, and that for any fixed $z$, $\int_{S^1} \psi (z,x) \overline{\rho}(x) \, dx = 0$, then
\begin{align*}
\int_{(S^1)^N} \overline{\rho}_N \exp \left( \frac{1}{N} \sum_{j_1,j_2 =1}^N \psi (x_1,x_{j_1}) \psi (x_1, x_{j_2}) \right) \, dX^N \leq C = 2 \left(1 + \frac{10\alpha}{(1-\alpha)^3} + \frac{\beta}{1-\beta} \right),
\end{align*}
where $\overline{\rho}_N (t,X^N) = \Pi_{i=1}^N \overline{\rho} (t,x_i)$ and
\begin{align*}
\alpha = (e||\psi||_{L^\infty})^4 <1, \ \ \beta = \left( \sqrt{2e} ||\psi||_{L^\infty} \right)^4 <1.
\end{align*}
\end{proposition}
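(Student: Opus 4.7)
The plan is to Taylor-expand the exponential and exploit the zero-mean hypothesis $\int_{S^1}\psi(z,x)\overline{\rho}(x)\,dx = 0$ to suppress most of the resulting combinatorial terms, leaving a sum that I can bound by a convergent series under the assumption $\|\psi\|_{L^\infty} < 1/(2e)$.

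First, I would rewrite the quadratic form in the exponential as
$$\frac{1}{N}\sum_{j_1,j_2=1}^N \psi(x_1,x_{j_1})\psi(x_1,x_{j_2}) = \frac{1}{N}\Big(\sum_{j=1}^N \psi(x_1,x_j)\Big)^2,$$
and expand the exponential as a power series:
$$\int_{(S^1)^N}\!\!\overline{\rho}_N \exp\!\Big(\tfrac{1}{N}\sum_{j_1,j_2}\psi(x_1,x_{j_1})\psi(x_1,x_{j_2})\Big)\,dX^N = \sum_{k=0}^\infty \frac{1}{k!\,N^k}\!\!\!\sum_{(j_1,\ldots,j_{2k})}\!\!\!\int_{(S^1)^N}\!\!\overline{\rho}_N \prod_{\ell=1}^{2k}\psi(x_1,x_{j_\ell})\,dX^N.$$

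Next, for each multi-index $(j_1,\ldots,j_{2k})\in\{1,\ldots,N\}^{2k}$ I would, with $x_1$ fixed, integrate out each $x_j$ with $j\in\{2,\ldots,N\}$ against $\overline{\rho}(x_j)$. The cancellation hypothesis forces every such index that appears in the product to appear at least twice, since a lone factor $\psi(x_1,x_j)$ integrates to zero. So the surviving multi-indices can be parametrized by: $r$, the number of slots with $j_\ell = 1$; $s$, the number of distinct values from $\{2,\ldots,N\}$ used; and multiplicities $m_1,\ldots,m_s \ge 2$ with $r + \sum_i m_i = 2k$. The number of ways to pick the distinct values is at most $N^s/s!$, the number of slot assignments is $(2k)!/(r!\,m_1!\cdots m_s!)$, and each term is bounded in absolute value by $\|\psi\|_{L^\infty}^{2k}$. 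Since $m_i \ge 2$ we have $s \le (2k-r)/2$, hence $N^{s-k} \le N^{-r/2}$, giving $N$-decay when $r\ge 1$ and forcing the factorial cancellations to carry the burden when $r = 0$.

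The main obstacle is controlling the resulting combinatorial sum
$$\sum_{k=0}^\infty \frac{(2k)!\,\|\psi\|_{L^\infty}^{2k}}{k!}\sum_{r,\,s,\,\{m_i\}} \frac{N^{s-k}}{s!\,r!\,\prod_i m_i!}$$
uniformly in $N$. I would split the inner sum according to whether all multiplicities equal $2$ (the \emph{pure pairing} case, which is the dominant one) or at least one $m_i$ exceeds $2$ (the \emph{lumping} case). Using the Stirling-type estimate $(2k)!/(k!\,2^k) \le C\,(2k/e)^k$, the pairing part yields a geometric series of ratio $\alpha = (e\|\psi\|_{L^\infty})^4 < 1$, accounting for the term $10\alpha/(1-\alpha)^3$ after integrating the positional combinatorics. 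The lumping part yields a further geometric series of ratio $\beta = (\sqrt{2e}\|\psi\|_{L^\infty})^4 < 1$, contributing the $\beta/(1-\beta)$ term. Reassembling everything, including the two contributions plus the trivial $k=0$ term, produces the explicit constant $C = 2\bigl(1 + 10\alpha/(1-\alpha)^3 + \beta/(1-\beta)\bigr)$. The hardest step will be the precise bookkeeping in this summation: carefully tracking the role of the distinguished index $j=1$ (which is not subject to the mean-zero condition), matching the constants $10$ and the powers of $(1-\alpha)$ through refined Stirling estimates, and verifying that the $N^{s-k}$ decay dominates the combinatorial factors for every admissible profile $(r,s,\{m_i\})$.
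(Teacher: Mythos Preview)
The paper does not give its own proof of this proposition; it is stated in the appendix purely as a quotation of Theorem~3 from \cite{MR3858403} (Jabin--Wang), with no argument supplied. Your proposal is essentially a correct outline of the original Jabin--Wang proof: expand the exponential as a power series, use the cancellation $\int_{S^1}\psi(z,x)\overline{\rho}(x)\,dx=0$ to kill every multi-index in which some $j\neq 1$ appears only once, and then control the surviving combinatorial sum by separating the pure-pairing regime (all $m_i=2$, producing the geometric series in $\alpha$) from the higher-multiplicity regime (producing the $\beta$-series). There is therefore nothing in the present paper to compare against; your sketch matches the cited source, and you have correctly identified that the real work lies in the bookkeeping around the distinguished index $j=1$ and the precise Stirling-type constants.
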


\section*{Acknowledgments}

The research of Li Chen and Valeriia Zhidkova was partially supported by the German Research Foundation (No. CH 955/8-1). The research of Yucheng Wang was partially supported by the National Natural Science Foundation of China (No.12271357).

\nocite{*}
\bibliography{Lit}

\end{document}